\newtheorem{corollary}{Corollary}
\newtheorem{proposition}{Proposition}
\newtheorem{theorem}{Theorem}
\newtheorem{lemma}{Lemma}
\numberwithin{equation}{section}
\begin{document}

\begin{frontmatter}
\title{Infinite products involving Dirichlet characters 
and cyclotomic polynomials}

\author{Karl Dilcher}
\address{Department of Mathematics and Statistics\\
 Dalhousie University\\
         Halifax, Nova Scotia, B3H 4R2, Canada}
\ead{dilcher@mathstat.dal.ca}
\author{Christophe Vignat \corref{cor1}}
\address{LSS-Supelec, Universit\'e Paris-Sud, Orsay, France\\ Department of
Mathematics, Tulane University, New Orleans, LA 70118, USA}
\ead{cvignat@tulane.edu}
\cortext[cor1]{Corresponding author}
\begin{abstract}
Using some basic properties of the gamma function, we evaluate a simple class
of infinite products involving Dirichlet characters as a finite product of
gamma functions and, in the case of odd characters, as a finite product of
sines. As a consequence we obtain evaluations of certain multiple $L$-series.
In the final part of this paper we derive expressions for infinite products
of cyclotomic polynomials, again as finite products of gamma or of sine 
functions.
\end{abstract}

\begin{keyword}
Infinite product, gamma function, multiple $L$-series, cyclotomic polynomial
\MSC 33B15, 11M32
\end{keyword}

\setcounter{equation}{0}

\end{frontmatter}


\section{Introduction}

Infinite products of various types play an important role in different areas of
mathematics, especially in analysis, number theory, and the theory of special
functions. On the one hand there are very general results, such as the 
following important theorem: 

{\it Let $a_1, a_2, a_3,\ldots$ be a sequence of real 
or complex numbers such that none of the $1+a_j$ vanishes. Then the infinite 
product, respectively the infinite series
\begin{equation}\label{1.1}
\prod_{k=1}^\infty\left(1+a_k\right)\qquad\hbox{and}\qquad \sum_{k=1}^\infty a_k
\end{equation}
either both converge, or both diverge.} (See, e.g., \cite[Sect.~2.7]{WW}, or
\cite[p.~104ff.]{Br} for a different proof in the case of real sequences.)

Divergence of an infinite product includes ``divergence to zero". An example
for this case is $a_k=\frac{-1}{k+1}$, i.e., a divergent harmonic series on the
right of \eqref{1.1}; this then corresponds to an infinite product on the left
whose $N$th partial product is easily seen to be $\frac{1}{N+1}$.

On the other hand, numerous specific infinite products can be found throughout
the literature, including compendia such as \cite[Sect.~89]{Ha}, 
\cite[Sect.~6.2]{PrE}, or the online resources \cite{We1} 
or \cite{Di}. While we just saw that the infinite product 
$\prod_{k=2}^\infty(1-\frac{1}{k})$ diverges, some well-known convergent 
variants, along with their values, are
\begin{equation}\label{1.2}
\prod_{k=2}^\infty\left(1-\frac{1}{k^2}\right)=\frac{1}{2},\qquad
\prod_{k=1}^\infty\left(1-\frac{(-1)^k}{2k+1}\right)=\frac{\pi}{4}\sqrt{2}.
\end{equation}
Also related to this is the Weierstrass factorization theorem which gives,
for example,
\begin{equation}\label{1.3}
\prod_{\substack{k=-\infty\\k\neq 0}}^\infty\left(1-\frac{z}{k}\right)e^{z/k}
=\frac{\sin(\pi z)}{\pi z},\qquad
ze^{\gamma z}\prod_{k=1}^\infty\left(1+\frac{z}{k}\right)e^{-z/k}
=\frac{1}{\Gamma(z)},
\end{equation}
where $\Gamma(z)$ is Euler's gamma function, and $\gamma$ is the 
Euler-Mascheroni constant. The two well-known identities in \eqref{1.3} are 
examples of infinite products representing functions, and in fact, the two
products in \eqref{1.2} are just special cases of such representations (see,
e.g., (89.5.11) and (89.4.11), respectively, in \cite{Ha}). The first identity
in \eqref{1.2} actually follows immediately from the first identity in 
\eqref{1.3}.

The second identity in \eqref{1.3} is a first indication of the fact that 
many infinite products are closely related to the gamma function. Indeed, a 
second general result, after \eqref{1.1}, states that a convergent infinite
product of a rational function in the index $k$ can always be written as a 
product or quotient of finitely many values of the gamma function. The expansion
on the right of \eqref{1.3} is an ingredient in the proof of this fact; see
\cite[Sect.~12.13]{WW}, or \cite{CS} for further comments and examples.

The first part of this paper will be concerned with a generalization of the 
right-hand identity
in \eqref{1.2}. In fact, this identity can be written in terms of a certain
Dirichlet character. This will be the starting point for considering such sums
for arbitrary real or complex nontrivial Dirichlet characters. More generally,
we can also consider corresponding functions of a complex variable.

Our main results, along with some consequences and examples, will be stated 
in Section~2, and in Section~3 we will prove these results. In Section~4 we 
use them to obtain evaluations for certain multiple $L$-series, and in 
Section~5 for analogous multiple $L$-star series.
In Section~6 we use the left-hand identity of \eqref{1.2} as a point of 
departure and obtain evaluations for infinite products of cyclotomic 
polynomials. We finish this paper with some additional remarks in Section~6.

\section{The main results}

Let $\chi$ be the unique nontrivial Dirichlet character modulo 4, i.e., the 
periodic function of period 4 defined by $\chi(1)=1, \chi(3)=-1$, and 
$\chi(0)=\chi(2)=0$. Then we can rewrite the second identity in \eqref{1.2}  as 
\begin{equation}\label{2.1}
\prod_{k=2}^\infty\left(1-\frac{\chi(k)}{k}\right)=\frac{\pi}{4}\sqrt{2},
\end{equation}
and as a function of the real or complex variable $z$ as
\begin{equation}\label{2.2}
\prod_{k=2}^\infty\left(1-\chi(k)\frac{z}{k}\right)
=\frac{\sqrt{2}}{1-z}\sin{\frac{(1-z)\pi}{4}};
\end{equation}
see, e.g., \cite[(89.4.11)]{Ha} or \cite[p.~224]{Br}. The identity \eqref{2.1}
follows immediately from \eqref{2.2} by taking the limit as $z\rightarrow 1$.

Our main result is the following generalization of \eqref{2.2}. For properties
of Dirichlet characters, the reader may consult, e.g., \cite{Ap}.

\begin{theorem}
Let $\chi$ be a primitive nonprincipal Dirichlet character with conductor 
$q>1$. Then
\begin{equation}\label{2.3}
\prod_{k=2}^\infty\left(1-\chi(k)\frac{z}{k}\right)
=\frac{(2\pi)^{\varphi(q)/2}}{(1-z)\epsilon(q)}
\cdot\prod_{\substack{j=1\\(j,q)=1}}^{q-1}\frac{1}{\Gamma\left(\frac{j-\chi(j)z}{q}\right)},
\end{equation}
where $\varphi(q)$ is Euler's totient function, $(j,q)$ denotes the gcd, and
$\epsilon(q)$ is defined by
\[
\epsilon(q)=\begin{cases}
\sqrt{p} &\hbox{when $q$ is a power of a prime $p$},\\
1 &\hbox{otherwise}.
\end{cases}
\]
\end{theorem}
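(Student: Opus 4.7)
The plan is to group the factors of the product according to residue class modulo $q$, convert each resulting finite block into a ratio of gamma values via the telescoping identity $\prod_{m=0}^{M-1}(a+m)=\Gamma(a+M)/\Gamma(a)$, take $M\to\infty$, and finally evaluate the product of gammas at reduced residues using the Gauss multiplication formula and M\"obius inversion.

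First, since $\chi(k)=0$ whenever $\gcd(k,q)>1$, I would consider partial products of length $Mq-1$ and regroup them by residue class. Writing the generic nonzero index as $k=j+mq$ with $(j,q)=1$, $1\le j\le q-1$, $0\le m\le M-1$, and splitting off the $k=1$ factor (which contributes $1-z$), one obtains
\[
\prod_{k=2}^{Mq-1}\!\left(1-\chi(k)\frac{z}{k}\right)
=\frac{1}{1-z}\prod_{\substack{j=1\\(j,q)=1}}^{q-1}\prod_{m=0}^{M-1}\!\left(1-\chi(j)\frac{z}{j+mq}\right).
\]
The key rewriting is
\[
1-\chi(j)\frac{z}{j+mq}=\frac{\tfrac{j-\chi(j)z}{q}+m}{\tfrac{j}{q}+m},
\]
so that the telescoping identity converts the inner product into
\[
\frac{\Gamma(j/q)}{\Gamma((j-\chi(j)z)/q)}\cdot
\frac{\Gamma\bigl((j-\chi(j)z)/q+M\bigr)}{\Gamma(j/q+M)}.
\]

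Next, I would let $M\to\infty$. By the standard asymptotic $\Gamma(x+a)/\Gamma(x+b)\sim x^{a-b}$, the product of the $M$-dependent ratios behaves like $M^{-(z/q)\sum_{(j,q)=1}\chi(j)}$, which tends to $1$ because $\sum_{(j,q)=1}\chi(j)=0$ for every nonprincipal character modulo $q$. This yields the intermediate formula
\[
\prod_{k=2}^\infty\left(1-\chi(k)\frac{z}{k}\right)
=\frac{1}{1-z}\prod_{\substack{j=1\\(j,q)=1}}^{q-1}\frac{\Gamma(j/q)}{\Gamma((j-\chi(j)z)/q)}.
\]

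To conclude, I would evaluate the constant $G(q):=\prod_{1\le j\le q-1,\,(j,q)=1}\Gamma(j/q)$. Grouping terms in the classical Gauss product $\prod_{j=1}^{q-1}\Gamma(j/q)=(2\pi)^{(q-1)/2}/\sqrt{q}$ by $d=\gcd(j,q)$ and substituting $j=dj'$ gives the multiplicative identity
\[
(2\pi)^{(q-1)/2}/\sqrt{q}=\prod_{\substack{m\mid q\\ m>1}}G(m).
\]
Taking logarithms and applying M\"obius inversion, together with the standard identities $\sum_{m\mid q}\mu(q/m)(m-1)=\varphi(q)$ (for $q>1$) and $\sum_{m\mid q}\mu(q/m)\log m=\Lambda(q)$, one obtains $G(q)=(2\pi)^{\varphi(q)/2}/\exp(\Lambda(q)/2)$; the prime-power dichotomy defining $\epsilon(q)$ is then precisely the support of the von Mangoldt function. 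Substituting into the intermediate formula gives \eqref{2.3}. The step I expect to be the main obstacle is this final M\"obius-inversion calculation, because it is where the seemingly mysterious factor $\epsilon(q)$ must be extracted from an otherwise elementary product of gamma values; everything preceding it is driven by the Weierstrass-type manipulation of partial products.
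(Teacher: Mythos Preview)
Your proof is correct and structurally parallel to the paper's, but both steps are executed by different means. For the intermediate identity
\[
\prod_{k\ge 2}\Bigl(1-\chi(k)\tfrac{z}{k}\Bigr)=\frac{1}{1-z}\prod_{(j,q)=1}\frac{\Gamma(j/q)}{\Gamma\bigl((j-\chi(j)z)/q\bigr)},
\]
the paper invokes the Weierstrass product for $1/\Gamma$ (its Lemma~1): multiplying the identities $\Gamma(u)/\Gamma(u+v)=e^{\gamma v}\prod_{k\ge 0}(1+v/(u+k))e^{-v/(k+1)}$ over the residue classes, the condition $\sum_j\chi(j)=0$ kills the exponential convergence factors outright. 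You instead truncate at $Mq-1$, telescope each residue class into a gamma ratio, and let $M\to\infty$ via $\Gamma(M+a)/\Gamma(M+b)\sim M^{a-b}$, the same vanishing sum now appearing as the exponent of $M$. For the constant $\prod_{(j,q)=1}\Gamma(j/q)$, the paper simply quotes the Chamberland--Straub evaluation (its Lemma~2), whereas your M\"obius-inversion argument from the classical Gauss product is in effect a proof of that lemma, with the von Mangoldt function making the source of $\epsilon(q)$ explicit. Your route is therefore more self-contained; the only point to add is the routine observation that convergence along the subsequence $N=Mq-1$ forces convergence of the full sequence of partial products, since between consecutive blocks only finitely many factors, each $1+O(1/N)$, are inserted.
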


In the case where $\chi$ is an odd character, i.e., $\chi(-1)=-\chi(1)=-1$,
we obtain the following result from \eqref{2.3}.

\begin{theorem}
Let $\chi$ be an odd primitive Dirichlet character with conductor $q>1$. 
Then
\begin{equation}\label{2.4}
\prod_{k=2}^\infty\left(1-\chi(k)\frac{z}{k}\right)
=\frac{2^{\varphi(q)/2}}{(1-z)\epsilon(q)}
\cdot\prod_{\substack{j=1\\(j,q)=1}}^{\lfloor\frac{q-1}{2}\rfloor}
\sin\left(\pi\frac{j-\chi(j)z}{q}\right).
\end{equation}
\end{theorem}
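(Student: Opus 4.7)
The plan is to derive Theorem~2 directly from Theorem~1 by pairing up the gamma factors on the right-hand side of \eqref{2.3} using the reflection formula $\Gamma(s)\Gamma(1-s)=\pi/\sin(\pi s)$. Since $\chi$ is odd, $\chi(q-j)=\chi(-j)=-\chi(j)$, so for each $j$ coprime to $q$ with $1\le j\le q-1$, the residue $q-j$ is also coprime to $q$ and satisfies
\[
\frac{(q-j)-\chi(q-j)z}{q}=1-\frac{j-\chi(j)z}{q}.
\]
Hence the two gamma factors indexed by $j$ and $q-j$ combine, via the reflection formula, into $\pi/\sin\!\bigl(\pi\tfrac{j-\chi(j)z}{q}\bigr)$.

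Next I would verify that the involution $j\mapsto q-j$ has no fixed point on the set of $j$'s coprime to $q$ in $\{1,\dots,q-1\}$, so that this set partitions cleanly into $\varphi(q)/2$ pairs. A fixed point would force $j=q/2$, and coprimality with $q$ would then require $q=2$; but an odd character mod~$2$ cannot exist, so for odd $\chi$ we have $q\ge3$ and no fixed points arise. Choosing the representative of each pair to be the smaller one gives exactly the indexing $1\le j\le\lfloor(q-1)/2\rfloor$ with $(j,q)=1$ on the right of \eqref{2.4}.

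Substituting the paired identity into \eqref{2.3} replaces the product of $\varphi(q)$ reciprocal gamma values by a product of $\varphi(q)/2$ sine values divided by $\pi^{\varphi(q)/2}$. The constant becomes
\[
\frac{(2\pi)^{\varphi(q)/2}}{(1-z)\epsilon(q)}\cdot\frac{1}{\pi^{\varphi(q)/2}}=\frac{2^{\varphi(q)/2}}{(1-z)\epsilon(q)},
\]
which yields \eqref{2.4} exactly. The only obstacle worth flagging is the purely bookkeeping matter of confirming that the pairing is fixed-point-free so that no stray half-pair or $\sin(\pi/2)$-type factor is left over; everything else is routine simplification once Theorem~1 is granted.
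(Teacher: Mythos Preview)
Your proposal is correct and follows essentially the same route as the paper: start from Theorem~1, pair the gamma factors for $j$ and $q-j$ via the reflection formula using $\chi(q-j)=-\chi(j)$, check that the involution $j\mapsto q-j$ is fixed-point-free on the residues coprime to $q$ (the paper phrases this as ``for even $q>2$ the middle term $q/2$ is a nontrivial divisor of $q$''), and simplify the resulting constant $(2\pi)^{\varphi(q)/2}/\pi^{\varphi(q)/2}=2^{\varphi(q)/2}$.
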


As an immediate consequence of these results we consider the case $z=1$. To 
deal with the removable singularity at $z=1$ on the right-hand sides of 
\eqref{2.3} and \eqref{2.4}, we isolate the factor belonging to $j=1$ in both
cases and use the limits
\begin{equation}\label{2.5}
\lim_{z\rightarrow 1}\left((1-z)\Gamma(\tfrac{1-z}{q})\right) = q,\qquad
\lim_{z\rightarrow 1}\frac{\sin(\frac{\pi}{q}(1-z))}{1-z}=\frac{\pi}{q}.
\end{equation}
Here the first limit in \eqref{2.5} comes from the well-known series expansion
$z\Gamma(z)=1-\gamma z+O(z^2)$. This proves the following corollary.

\begin{corollary}
Let $\chi$ be a primitive nonprincipal Dirichlet character with conductor
$q>1$. Then
\begin{equation}\label{2.6}
\prod_{k=2}^\infty\left(1-\frac{\chi(k)}{k}\right)
=\frac{(2\pi)^{\varphi(q)/2}}{q\epsilon(q)}
\cdot\prod_{\substack{j=2\\(j,q)=1}}^{q-1}\frac{1}{\Gamma\left(\frac{j-\chi(j)}{q}\right)},
\end{equation}
and when $\chi$ is odd, we have
\begin{equation}\label{2.7}
\prod_{k=2}^\infty\left(1-\frac{\chi(k)}{k}\right)
=\frac{\pi 2^{\varphi(q)/2}}{q\epsilon(q)}
\cdot\prod_{\substack{j=2\\(j,q)=1}}^{\lfloor\frac{q-1}{2}\rfloor}\sin\left(\pi\frac{j-\chi(j)}{q}\right).
\end{equation}
\end{corollary}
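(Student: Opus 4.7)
The strategy is to let $z\to 1$ in Theorems 1 and 2. On the left-hand side of both (2.3) and (2.4) the limit is immediate and yields the product $\prod_{k=2}^\infty(1-\chi(k)/k)$. The right-hand side appears to blow up because of the $1/(1-z)$ factor, but the $j=1$ term in the finite product supplies a compensating simple zero at $z=1$: since every Dirichlet character satisfies $\chi(1)=1$, this term equals $1/\Gamma((1-z)/q)$ in (2.3) and $\sin(\pi(1-z)/q)$ in (2.4), both of which vanish linearly in $1-z$.

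My plan is therefore to isolate the $j=1$ factor, pair it with $1/(1-z)$, and apply the two limits collected in (2.5). For (2.3) this produces the factor $1/q$ and yields (2.6); for (2.4) it produces the factor $\pi/q$ and yields (2.7). The first limit in (2.5) follows from the expansion $w\Gamma(w)=1-\gamma w+O(w^2)$ after setting $w=(1-z)/q$, and the second from the standard $\sin u/u\to 1$. The remaining $j\ge 2$ factors pass to the limit termwise without issue, since for any $j$ with $2\le j\le q-1$ and $(j,q)=1$ one has $|\chi(j)|=1$ and hence $\mathrm{Re}(j-\chi(j))\ge j-1\ge 1$, so $(j-\chi(j))/q$ lies in the strip $0<\mathrm{Re}(w)<1$, where $1/\Gamma(w)$ is analytic and $\sin(\pi w)$ is nonzero.

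There is no genuine obstacle: the content of the corollary is simply the book-keeping of the removable singularity at $z=1$, which the hints in (2.5) reduce to a one-line computation. The only point worth recording in the write-up is that, after the removal of $j=1$, both finite products on the right-hand sides of (2.6) and (2.7) are indexed from $j=2$, which accounts for the slightly different form of the exponents and constants compared with (2.3) and (2.4).
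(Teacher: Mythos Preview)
Your proposal is correct and follows exactly the paper's approach: isolate the $j=1$ factor, pair it with $1/(1-z)$, and invoke the two limits in (2.5) to remove the singularity, leaving the products over $j\ge 2$. The extra sentence justifying that the remaining factors are well-behaved is a harmless elaboration (note, incidentally, that $1/\Gamma$ is entire, so the strip bound is not actually needed for the gamma product).
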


We conclude this section with several examples.

\medskip
\noindent
{\bf Example~1.} Let $q=3$. Then the only nonprincipal character is determined
by $\chi(1)=1$, $\chi(2)=-1$, and $\chi(3)=0$. With $\varphi(q)=2$ and 
$\epsilon(q)=3$, the identity \eqref{2.4} gives
\begin{equation}\label{2.7a}
\prod_{k=2}^\infty\left(1-\chi(k)\frac{z}{k}\right)
=\frac{2}{(1-z)\sqrt{3}}\cdot\sin(\tfrac{\pi}{3}(1-z)),
\end{equation}
and with $z=1$ and $z=\frac{1}{2}$ we get, respectively,
\[
\prod_{k=2}^\infty\left(1-\frac{\chi(k)}{k}\right)
=\frac{2\pi}{3\sqrt{3}},\qquad
\prod_{k=2}^\infty\left(1-\frac{\chi(k)}{2k}\right)
=\frac{2}{\sqrt{3}}
\]

\medskip
\noindent
{\bf Example~2.} When $q=4$, we have again a unique nonprincipal character, 
namely the one defined just before \eqref{2.1}. Since $\varphi(q)=2$ and
$\epsilon(q)=\sqrt{2}$, the identities \eqref{2.4} and \eqref{2.7} immediately
give \eqref{2.2} and \eqref{2.1}, respectively.

\medskip
\noindent
{\bf Example~3.} In the case $q=5$ there are $\varphi(q)-1=3$ nonprincipal
characters, with the unique real one given by $\chi(1)=\chi(4)=1$,
$\chi(2)=\chi(3)=-1$, and $\chi(5)=0$. Since $\chi$ is even, we can only use
\eqref{2.3} or \eqref{2.6}. Restricting ourselves to the case $z=1$, and 
noting that $\epsilon(q)=\sqrt{5}$, we find from \eqref{2.6} that 
\[
\prod_{k=2}^\infty\left(1-\frac{\chi(k)}{k}\right)
=\frac{4\pi^2}{5\sqrt{5}}\cdot\frac{1}{\Gamma(\tfrac{3}{5})^2\Gamma(\tfrac{4}{5})}.
\]

\medskip
\noindent
{\bf Example~4.} $q=5$ is also the smallest conductor that has nonreal 
characters. We choose the one (of two) that is given by $\chi(1)=1$, 
$\chi(2)=i$, $\chi(3)=-i$, $\chi(4)=-1$, and $\chi(5)=0$. Then \eqref{2.4} gives
\[
\prod_{k=2}^\infty\left(1-\chi(k)\frac{z}{k}\right)
=\frac{4}{(1-z)\sqrt{5}}\cdot\sin(\tfrac{\pi}{5}(1-z))\cdot\sin(\tfrac{\pi}{5}(2-iz)),
\]
and with \eqref{2.7} we have
\begin{align*}
\prod_{k=2}^\infty\left(1-\frac{\chi(k)}{k}\right)
&=\frac{4\pi}{5\sqrt{5}}\cdot\sin(\tfrac{\pi}{5}(2-i))\\
&=\frac{4\pi}{5\sqrt{5}}\left(\sin(\tfrac{2\pi}{5})\cosh(\tfrac{\pi}{5})
-i\cos(\tfrac{2\pi}{5})\sinh(\tfrac{\pi}{5})\right).
\end{align*}

\medskip
\noindent
{\bf Example~5.} The smallest conductor that is not a power of a prime is $q=6$,
with the unique nonprincipal character given by $\chi(1)=1$, $\chi(5)=-1$, and
all the other values (modulo 6) being 0. We have $\varphi(q)=2$ and
$\epsilon(q)=1$; hence \eqref{2.4} gives 
\begin{equation}\label{2.7b}
\prod_{k=2}^\infty\left(1-\chi(k)\frac{z}{k}\right)
=\frac{2}{(1-z)}\cdot\sin(\tfrac{\pi}{6}(1-z)),
\end{equation}
with the special values 
\[
\prod_{k=2}^\infty\left(1-\frac{\chi(k)}{k}\right)
=\frac{\pi}{3},\qquad
\prod_{k=2}^\infty\left(1-\frac{\chi(k)}{2k}\right)
= 2\sqrt{2-\sqrt{3}}.
\]

\medskip
\noindent
{\bf Remarks.} (1) In \cite{Ya}, Proposition~3.1, Yamasaki proved the following
variant of our Theorem~2: For an odd Dirichlet character modulo $q\geq 3$, we
have
\begin{equation}\label{2.8}
\prod_{k=1}^\infty\left(1-\chi(k)\frac{z}{k}\right)
=\frac{2^{(q-1)/2}}{\sqrt{q}}
\cdot\prod_{j=1}^{\lfloor\frac{q-1}{2}\rfloor}
\sin\left(\pi\frac{j-\chi(j)z}{q}\right).
\end{equation}
He also obtained an extension of \eqref{2.8} for higher powers of $z/k$ in the
product on the left of \eqref{2.8}, as well as analogues for the trivial 
character and the principal character modulo~2.
The main difference between \eqref{2.8} and \eqref{2.4} lies in the fact that
the number of factors on the right of \eqref{2.4} is $\frac{1}{2}\varphi(q)$, 
which is fewer than in \eqref{2.8} for composite $q$.

(2) It will be clear from the proof in the next section that Theorem~1 holds for
arbitrary periodic functions $\chi$ with period $q$, satisfying 
$\chi(1)+\dots+\chi(q)=0$ and $\chi(j)=0$ when $\gcd(j,q)>1$. Theorem~2 holds
when, in addition, $\chi$ is an odd function. One could also obtain versions
of Theorems~1 and~2 in cases where $\chi(j)$ is not necessarily 0 when 
$\gcd(j,q)>1$.
 
\section{Proofs of Theorems~1 and~2}

The basis for the proof of Theorem~1 is the infinite product expansion for 
$1/\Gamma(z)$ in \eqref{1.3}. Replacing $z$ by $u+v$ and by $v$, dividing the
corresponding products, and slightly shifting the terms in the product, we get
\begin{equation}\label{3.1}
\frac{\Gamma(u)}{\Gamma(u+v)} =
e^{\gamma v}\prod_{k=0}^\infty\left(1+\frac{v}{u+k}\right)e^{-v/(k+1)}.
\end{equation}
This can also be found in \cite[p.~5]{EMOT} as identity (2). It will be the 
basis for the following lemma.

\begin{lemma}
Let $n\geq 1$ be an integer, $a, z_1,\ldots,z_n$ real or complex numbers with
$z_j\neq 0$ for $j=1,\ldots,n$, and let 
$f:\{1,2,\ldots,n\}\rightarrow\mathbb C$ be a function satisfying
$f(1)+\dots+f(n)=0$. Then
\begin{equation}\label{3.2}
\prod_{k=0}^\infty\prod_{j=1}^n\left(1-f(j)\frac{a}{z_j+k}\right)
=\prod_{j=1}^n\frac{\Gamma(z_j)}{\Gamma(z_j-f(j)a)}.
\end{equation}
\end{lemma}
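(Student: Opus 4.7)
The plan is to take identity \eqref{3.1} as the single engine of the proof: everything will reduce to applying it once per index $j$, multiplying, and using the hypothesis $\sum_j f(j)=0$ to kill the auxiliary exponential factors.

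First I would substitute $u=z_j$ and $v=-f(j)a$ into \eqref{3.1}. This gives, for each $j\in\{1,\ldots,n\}$,
\[
\frac{\Gamma(z_j)}{\Gamma(z_j-f(j)a)}
= e^{-\gamma f(j)a}\prod_{k=0}^\infty\left(1-\frac{f(j)a}{z_j+k}\right)e^{f(j)a/(k+1)}.
\]
Taking the product of these $n$ identities and pulling the Euler--Mascheroni factors together produces $\exp(-\gamma a\sum_{j=1}^n f(j))=1$ by hypothesis, so the right-hand side becomes
\[
\prod_{j=1}^n\prod_{k=0}^\infty\left(1-\frac{f(j)a}{z_j+k}\right)e^{f(j)a/(k+1)}.
\]

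Next I would interchange the finite product over $j$ with the infinite product over $k$. Since $j$ ranges over a finite set, this is routine: form the partial products $\prod_{k=0}^{N}\prod_{j=1}^n(\cdots)$, rearrange the finite double product, and let $N\to\infty$, using convergence of each single-$j$ infinite product in \eqref{3.1} (and hence of their finite-in-$j$ product). After the swap, the inner product over $j$ of the exponential factor at level $k$ is
\[
\prod_{j=1}^n e^{f(j)a/(k+1)} = \exp\!\Bigl(\tfrac{a}{k+1}\sum_{j=1}^n f(j)\Bigr)=1,
\]
again by the hypothesis $\sum_j f(j)=0$. What remains is exactly the left-hand side of \eqref{3.2}, completing the proof.

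The main thing to watch is the legitimacy of distributing and regrouping the exponential convergence factors: in \eqref{3.1} the factor $e^{-v/(k+1)}$ is needed because the individual product $\prod_k(1+v/(u+k))$ diverges in general. The point of the hypothesis $f(1)+\cdots+f(n)=0$ is precisely that, after the finite product over $j$, those exponential regulators cancel termwise at each $k$, so the $k$-product converges without them; this is why the final answer has the clean form on the right of \eqref{3.2}. I expect no real obstacle beyond bookkeeping this cancellation carefully and justifying the finite-infinite product swap via partial products, as above.
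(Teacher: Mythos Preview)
Your proof is correct and follows essentially the same approach as the paper's: substitute $u=z_j$, $v=-f(j)a$ into \eqref{3.1}, multiply over $j$, and use $\sum_j f(j)=0$ to cancel both exponential factors. The paper compresses the interchange-of-products step into the single remark that the product in \eqref{3.1} is absolutely convergent, whereas you spell out the partial-product argument more carefully; otherwise the two arguments are identical.
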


\begin{proof}
for each $j=1,\ldots,n$ we replace $u$ by $z_j$ and $v$ by $-f(j)a$ in 
\eqref{3.1}. In taking the product of the resulting $n$ identities, we note that
\[
\prod_{j=1}^n e^{-\gamma f(j)} = e^{-\gamma(f(1)+\dots+f(n))} = e^0 = 1,
\]
and similarly for the second exponential term on the right of \eqref{3.1}.
Therefore the product in question immediately gives \eqref{3.2}. The operations
with the infinite products are legitimate since the product in \eqref{3.1} is
absolutely convergent.
\end{proof}

\noindent
{\bf Example~6.} Let $n=2$ and $f(1)=1, f(2)=-1$. Then \eqref{3.2} reduces to
\begin{equation}\label{3.3}
\prod_{k=0}^\infty\left(1-\frac{a}{z_1+k}\right)\left(1+\frac{a}{z_2+k}\right)
=\frac{\Gamma(z_1)\Gamma(z_2)}{\Gamma(z_1-a)\Gamma(z_2+a)}.
\end{equation}
This is, essentially, identity (4) in \cite[p.~5]{EMOT} or (89.9.2) in 
\cite{Ha}. Therefore \eqref{3.2} can be seen as a generalization of this
known identity.

\medskip
\noindent
{\bf Remark.} The condition $z_j\neq 0$ for $j=1,\ldots, n$ in Lemma~1 can be 
slightly relaxed by allowing $z_j=0$ when $f(j)=0$. However, in this case $z_j$
would be a redundant parameter anyway. On the other hand, the case 
$z_j-f(j)a=0$ is allowable since the identity on the right of \eqref{1.3} holds
for all $z\in\mathbb C$. In this case, both sides of \eqref{3.2} vanish.

\medskip
Our next lemma, due to Chamberland and Straub \cite[Theorem~5.1]{CS}, is an
extension of the well-known identity 
\begin{equation}\label{3.4}
\prod_{j=1}^{n-1}\Gamma\left(\frac{j}{n}\right) = \frac{(2\pi)^{(n-1)/2}}{\sqrt{n}},
\end{equation}
which is itself a consequence of Gauss's multiplication formula; see, e.g.,
Section 5.5(iii) in \cite{DLMF}.

\begin{lemma}[Chamberland and Straub]
For any integer $n\geq 2$ and prime $p$ we have
\begin{equation}\label{3.5}
\prod_{\substack{j=1\\(j,n)=1}}^{n-1}\Gamma\left(\frac{j}{n}\right) 
=\begin{cases}
(2\pi)^{\varphi(n)/2} &\hbox{if $n$ is not a prime power},\\ 
\frac{1}{\sqrt{p}}(2\pi)^{\varphi(n)/2} &\hbox{if $n=p^\nu, \nu\geq 1$.}
\end{cases}
\end{equation}
\end{lemma}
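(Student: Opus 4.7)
The plan is to reduce this to the classical multiplication formula (3.4) by sorting the indices $j$ according to $d=\gcd(j,n)$, and then extracting the coprime-index product via Möbius inversion over the divisors of $n$.

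First I would write $P(n):=\prod_{j=1}^{n-1}\Gamma(j/n)$ and $Q(n):=\prod_{(j,n)=1, 1\le j<n}\Gamma(j/n)$, with the convention $Q(1)=1$. For each $j$ with $1\le j\le n-1$, let $d=\gcd(j,n)$ and write $j=dk$, so that $\gcd(k,n/d)=1$ and $1\le k\le n/d-1$. Then $\Gamma(j/n)=\Gamma(k/(n/d))$, and grouping by $d$ (equivalently by $m:=n/d$) I obtain
\begin{equation*}
P(n)=\prod_{\substack{m\mid n\\ m>1}}Q(m)=\prod_{m\mid n}Q(m),
\end{equation*}
where the last equality uses $Q(1)=1$. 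Möbius inversion then yields
\begin{equation*}
\log Q(n)=\sum_{d\mid n}\mu(n/d)\log P(d).
\end{equation*}

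Next I would plug in the closed form from (3.4), namely $\log P(d)=\tfrac{d-1}{2}\log(2\pi)-\tfrac{1}{2}\log d$ (valid also for $d=1$ since both sides are $0$). The right-hand side splits into two Möbius sums. For the $\log(2\pi)$ coefficient I would use the two standard identities $\sum_{d\mid n}\mu(n/d)\,d=\varphi(n)$ and $\sum_{d\mid n}\mu(n/d)=[n=1]$, so that for $n\ge 2$ the coefficient of $\log(2\pi)$ is exactly $\varphi(n)/2$. For the $-\tfrac{1}{2}\log d$ coefficient I would invoke the well-known fact that $\sum_{d\mid n}\mu(n/d)\log d=\Lambda(n)$, the von Mangoldt function, which equals $\log p$ when $n=p^\nu$ is a prime power and $0$ otherwise.

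Combining these, $\log Q(n)=\tfrac{\varphi(n)}{2}\log(2\pi)-\tfrac{1}{2}\Lambda(n)$, and exponentiating gives the two cases in (3.5) immediately. There is no real obstacle here: the multiplication theorem (3.4) supplies the input, the divisor-sorting identity $P(n)=\prod_{m\mid n}Q(m)$ is purely bookkeeping, and the only nontrivial step is recognizing the two Möbius sums, both of which are textbook. An alternative route, if one wished to avoid Möbius inversion, would be to induct on the number of distinct prime factors of $n$ by peeling off one prime at a time via $P(n)=Q(n)\cdot\prod_{p\mid n}P(n/p)/\bigl(\text{overlap corrections}\bigr)$, but the Möbius approach is cleaner and matches the structure of the answer (a factor $\sqrt{p}$ appearing exactly when $\Lambda(n)\ne 0$).
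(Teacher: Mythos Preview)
Your proof is correct. The paper itself does not prove this lemma; it simply quotes it as Theorem~5.1 of Chamberland and Straub \cite{CS} and moves on. So there is no ``paper's own proof'' to compare against here.

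That said, your argument is exactly the natural one: the divisor-sorting identity $P(n)=\prod_{m\mid n}Q(m)$ is immediate, M\"obius inversion reduces everything to the classical product \eqref{3.4}, and the two resulting M\"obius sums are textbook (Euler $\varphi$ and the von Mangoldt identity $\sum_{d\mid n}\mu(n/d)\log d=\Lambda(n)$). The appearance of the extra factor $1/\sqrt{p}$ precisely when $n$ is a prime power is then explained cleanly by $\Lambda(n)$, which is a pleasant structural observation. There are no gaps.
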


If $n$ is itself a prime, we see that \eqref{3.5} reduces to \eqref{3.4}. We are
now ready to prove Theorem~1.

\begin{proof}[Proof of Theorem~1]
For any nonprincipal character $\chi$ with conductor $q$ we have 
$\chi(1)+\chi(2)+\dots+\chi(q)=0$. We can therefore use Lemma~1 with $n=q$,
$f(j)=\chi(j)$, $a=z/q$, and $z_j=j/q$. Then the left-hand side of \eqref{3.2} 
becomes
\begin{equation}\label{3.6}
\prod_{k=0}^\infty\prod_{j=1}^q\left(1-\chi(j)\frac{z}{kq+j}\right)
= (1-z)\prod_{k=2}^\infty\left(1-\chi(k)\frac{z}{k}\right),
\end{equation}
where we have used the fact that $\chi(1)=1$ for all Dirichlet characters.

To deal with the right-hand side of \eqref{3.2}, we first note that for all
Dirichlet characters we have $\chi(j)=0$ whenever $j$ and $q$ have a common
factor. Hence the product can be reduced to the one shown on the right of
\eqref{2.3}. Finally we note that the product of the numerators in \eqref{3.2}
can be evaluated with Lemma~2. Combining this with \eqref{3.4} and \eqref{3.2},
we immediately get \eqref{2.3}.
\end{proof}

\begin{proof}[Proof of Theorem~2]
This proof depends on Theorem~1 and another well-known and important property
of the gamma function, namely the reflection formula
\begin{equation}\label{3.7}
\Gamma(z)\Gamma(1-z) = \frac{\pi}{\sin(\pi z)},\quad z\neq 0,\pm 1,\pm 2,\ldots
\end{equation}
If $\chi$ is an odd character, then $\chi(q-j)=-\chi(j)$ for all integers $j$,
and for $j=1,2,\ldots,\lfloor(q-1)/2\rfloor$ we have with \eqref{3.7},
\begin{align}
\Gamma\left(\frac{j-\chi(j)z}{q}\right)&\Gamma\left(\frac{q-j-\chi(q-j)z}{q}\right)\nonumber \\
&=\Gamma\left(\frac{j-\chi(j)z}{q}\right)\Gamma\left(1-\frac{j-\chi(j)z}{q}\right)
= \frac{\pi}{\sin\left(\pi\frac{j-\chi(j)z}{q}\right)}.\label{3.8}
\end{align}
We note that for even $q>2$, the ``middle term" $q/2$ is obviously a nontrivial
divisor of $q$ and is therefore not counted. Hence \eqref{2.3} gives rise to 
$\varphi(q)/2$ factors of the type \eqref{3.8}, so the powers of $\pi$ in 
\eqref{2.3} cancel and we get \eqref{2.4}. This completes the proof of 
Theorem~2.
\end{proof}

\medskip
\noindent
{\bf Remarks.} (1) For real primitive characters $\chi$ 
with prime conductors $q$, i.e., $\chi(j)$ is the Legendre symbol
$(\frac{j}{q})$, the two lemmas are not needed. In this case, Theorem~1 can be
proved with the help of the identities \eqref{3.3} and \eqref{3.4}.

(2) If we set $z=0$ in \eqref{2.4}, then we immediately get
\[
\prod_{\substack{j=1\\(j,q)=1}}^{\lfloor\frac{q-1}{2}\rfloor}
\sin(\tfrac{\pi j}{q}) = \frac{\epsilon(q)}{2^{\varphi(q)/2}},
\]
with $\epsilon(q)$ as in Theorem~1. This can be seen as a companion identity to
\eqref{3.5}. In fact, it can also be derived from \eqref{3.5} by using 
\eqref{3.7}. It extends a well-known identity (see, e.g., 
\cite[Entry 6.1.2.3, p.~753]{PrE}) in which the product is taken over the whole
range, i.e., including the terms for which $\gcd(j,q)\neq 1$.

\section{Values of single and multiple $L$-series}

In this section we use Theorems~1 and~2 to derive expressions for some special
values of single and multiple $L$-series. We require the well-know digamma
function (or psi function), defined by
\begin{equation}\label{4.1}
\psi(z) = \frac{\Gamma'(z)}{\Gamma(z)},\qquad z\neq 0, -1, -2,\ldots
\end{equation}
We begin with the easiest case.

\begin{corollary}
Let $\chi$ be a primitive nonprincipal character with conductor $q>1$. Then
\begin{equation}\label{4.2}
\sum_{k=1}^\infty\frac{\chi(k)}{k}
=\frac{-1}{q}\sum_{j=1}^{q-1}\chi(j)\psi(\tfrac{j}{q}),
\end{equation}
and when $\chi$ is odd,
\begin{equation}\label{4.3}
\sum_{k=1}^\infty\frac{\chi(k)}{k}
=\frac{\pi}{q}\sum_{j=1}^{\lfloor\frac{q-1}{2}\rfloor}
\chi(j)\cot(\tfrac{\pi j}{q}).
\end{equation}
\end{corollary}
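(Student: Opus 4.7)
The plan is to obtain both identities by taking logarithmic derivatives of the product formulas from Theorems~1 and~2 and evaluating at $z=0$. Write $L(z)$ for the common left-hand side of \eqref{2.3} and \eqref{2.4}. Since this infinite product represents an analytic function in a neighbourhood of $z=0$ (as the right-hand sides of \eqref{2.3} and \eqref{2.4} make explicit), term-by-term logarithmic differentiation is legitimate, and
\[
\frac{L'(0)}{L(0)} \;=\; \sum_{k=2}^\infty \frac{-\chi(k)/k}{1-\chi(k)\cdot 0/k}
\;=\; -\sum_{k=2}^\infty \frac{\chi(k)}{k} \;=\; 1 - \sum_{k=1}^\infty\frac{\chi(k)}{k},
\]
using $\chi(1)=1$. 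Note also that $L(0)=1$.

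For \eqref{4.2}, denote by $F(z)$ the right-hand side of \eqref{2.3}. Using $(\log\Gamma(g(z)))' = g'(z)\psi(g(z))$ and $g(z)=(j-\chi(j)z)/q$, the logarithmic derivative is
\[
\frac{F'(z)}{F(z)} \;=\; \frac{1}{1-z} \;+\; \frac{1}{q}\sum_{\substack{j=1\\(j,q)=1}}^{q-1}\chi(j)\,\psi\!\left(\frac{j-\chi(j)z}{q}\right).
\]
Setting $z=0$ and using $F(0)=L(0)=1$ gives $F'(0)=1+\frac{1}{q}\sum_j\chi(j)\psi(j/q)$. Equating this with $L'(0)$ and using that $\chi(j)=0$ whenever $(j,q)>1$ (so the sum can be extended to all $1\le j\le q-1$) yields \eqref{4.2}.

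For \eqref{4.3}, repeat the argument with $F(z)$ now denoting the right-hand side of \eqref{2.4}. Since $(\log\sin(\pi g(z)))' = \pi g'(z)\cot(\pi g(z))$ with the same $g$, the logarithmic derivative becomes
\[
\frac{F'(z)}{F(z)} \;=\; \frac{1}{1-z} \;-\; \frac{\pi}{q}\sum_{\substack{j=1\\(j,q)=1}}^{\lfloor(q-1)/2\rfloor}\chi(j)\,\cot\!\left(\pi\frac{j-\chi(j)z}{q}\right).
\]
Evaluating at $z=0$ and equating with $L'(0)$ produces \eqref{4.3}.

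The only real subtlety is justifying the term-by-term differentiation of the infinite product defining $L(z)$; but this is handled automatically, since Theorems~1 and~2 identify $L(z)$ with a quotient of gamma or sine functions that is manifestly analytic at $z=0$, so $L'(0)$ coincides with the value obtained by differentiating the series $\sum_{k\ge 2}\log(1-\chi(k)z/k)$ termwise. The verification that $F(0)=1$ on the gamma side follows directly from Lemma~2, but it is not strictly needed since one only requires $L(0)=F(0)$, which is immediate by construction.
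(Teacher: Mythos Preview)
Your proof is correct and follows essentially the same approach as the paper: both arguments amount to comparing the coefficient of $z$ (equivalently, the derivative at $z=0$) on the two sides of \eqref{2.3} and \eqref{2.4}. The paper first multiplies through by $1-z$ and then differentiates the finite product of reciprocal gamma functions directly via the product rule, whereas you take the logarithmic derivative of both sides without clearing the pole; your route is marginally cleaner because $(\log\Gamma)'=\psi$ and $(\log\sin)'=\pi\cot$ appear immediately, but the substance is identical. The paper also notes, as an alternative, that \eqref{4.3} follows from \eqref{4.2} via the reflection identity $\psi(z)-\psi(1-z)=-\pi\cot(\pi z)$, which you do not use.
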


This result is well-known; for instance, \eqref{4.2} can be found in 
\cite[p.~168]{Coh} as Proposition~10.2.5(4).

\begin{proof}[Proof of Corollary~2]
After multiplying both sides of \eqref{2.3} and \eqref{2.4} by $1-z$ and
expanding, we see that the coefficient of $z$ is the negative of the left-hand
sides of \eqref{4.2} and \eqref{4.3}. 

Next we evaluate the derivative at $z=0$ of the finite product on the right 
of \eqref{2.3}, obtaining
\[
\prod_{\substack{j=1\\(j,q)=1}}^{q-1}\Gamma(\tfrac{j}{q})^{-1}
\sum_{\substack{j=1\\(j,q)=1}}^{q-1}\Gamma(\tfrac{j}{q})
\cdot\frac{-\Gamma'(\tfrac{j}{q})}{\Gamma(\tfrac{j}{q})^2}
\cdot\frac{-\chi(j)}{q}
=\frac{\epsilon(q)}{(2\pi)^{\varphi(q)/2}q}
\sum_{\substack{j=1\\(j,q)=1}}^{q-1}\psi(\tfrac{j}{q})\chi(j),
\]
where we have used \eqref{3.5} and the definition \eqref{4.1}. This, together
with \eqref{2.3}, gives the desired identity \eqref{4.2}. 

In an analogous way we obtain \eqref{4.3} from \eqref{2.7}. Alternatively, and
more easily, \eqref{4.3} follows immediately from \eqref{4.2} by way of the 
well-known identity
\begin{equation}\label{4.3a}
\psi(z)-\psi(1-z) = -\pi\cot(\pi z),\quad z\neq 0,\pm 1,\pm 2,\ldots,
\end{equation}
which can be obtained by differentiating \eqref{3.7} and using \eqref{4.1}.
\end{proof}

\medskip
\noindent
{\bf Example~7.} With the usual notation of $\chi_3$ and $\chi_{-4}$ for the
nonprincipal characters with $q=3$ and $q=4$, respectively, \eqref{4.3}
immediately gives 
\[
\sum_{k=1}^\infty\frac{\chi_3}{k} = \frac{\pi}{3\sqrt{3}},\qquad
\sum_{k=1}^\infty\frac{\chi_{-4}}{k} = \frac{\pi}{4}.
\]
The second of these identities is the well-known formula of Gregory and 
Leibniz.

\medskip
More generally, our results in Section~2 can be used to obtain some special
values for multiple $L$-functions. In a more general setting these are defined
by
\begin{equation}\label{4.3b}
L_n(s_1,\ldots,s_n\mid \chi_1,\ldots,\chi_n)
:=\sum_{1\leq k_1<\dots<k_n}\frac{\chi_1(k_1)}{k_1^{s_1}}\dots\frac{\chi_n(k_n)}{k_n^{s_n}},
\end{equation}
where $\chi_1,\ldots,\chi_n$ are Dirichlet characters with the same conductor
$q\geq 2$. If Re$(s_j)\geq 1$ for $j=1,2,\ldots,n-1$ and Re$(s_n)> 1$, then
this series is absolutely convergent and defines a holomorphic function in the
$n$ complex variables $s_1,\ldots,s_n$ in this region; it can also be 
meromorphically continued to all of ${\mathbb C}^n$ (see \cite{AI}).

Here we will only be interested in the values at $s_1=\dots=s_n=1$, in the
special case where $\chi_1=\dots=\chi_n=\chi$, i.e., we consider the multiple
series
\begin{equation}\label{4.4}
L_n(\chi)
:=\sum_{1\leq k_1<\dots<k_n}\frac{\chi(k_1)}{k_1}\dots\frac{\chi(k_n)}{k_n}.
\end{equation}
By expanding the infinite product, we obtain
\begin{equation}\label{4.4a}
\prod_{k=1}^\infty\left(1-\chi(k)\frac{z}{k}\right)
= 1 + \sum_{n=1}^\infty(-1)^nL_n(\chi)z^n.
\end{equation}
As a first application of this identity we use three examples of Theorem~2,
namely the cases of odd conductors such that there is only one sine factor on
the right of \eqref{2.4}; see Examples~1, 2, and~5, respectively.

\begin{corollary}
$(a)$ If $\chi$ is the nonprincipal character with $q=3$, then
\begin{equation}\label{4.4b}
L_{2n}(\chi) = \frac{(-1)^n}{(2n)!}\left(\frac{\pi}{3}\right)^{2n},\qquad
L_{2n+1}(\chi)=\frac{(-1)^n}{(2n+1)!\sqrt{3}}\left(\frac{\pi}{3}\right)^{2n+1}.
\end{equation}
$(b)$ If $\chi$ is the nonprincipal character with $q=4$, then
\begin{equation}\label{4.4c}
L_{2n}(\chi) = \frac{(-1)^n}{(2n)!}\left(\frac{\pi}{4}\right)^{2n},\qquad
L_{2n+1}(\chi)=\frac{(-1)^n}{(2n+1)!}\left(\frac{\pi}{4}\right)^{2n+1}.
\end{equation}
$(c)$ If $\chi$ is the nonprincipal character with $q=6$, then
\begin{equation}\label{4.4d}
L_{2n}(\chi) = \frac{(-1)^n}{(2n)!}\left(\frac{\pi}{6}\right)^{2n},\qquad
L_{2n+1}(\chi)=\frac{(-1)^n\sqrt{3}}{(2n+1)!}\left(\frac{\pi}{6}\right)^{2n+1}.
\end{equation}
\end{corollary}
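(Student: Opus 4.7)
The plan is to read off the coefficients of $z^n$ in the power series expansion of the infinite product, exploiting the fact that for each of the three conductors $q=3,4,6$ we have $\varphi(q)/2=1$, so Theorem~2 gives the product on the left of \eqref{4.4a} (after multiplying through by $(1-z)$ to absorb the $k=1$ factor) as a \emph{single} sine of a linear function of $z$.

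Concretely, I would start from \eqref{4.4a} rewritten as
\[
\prod_{k=1}^\infty\left(1-\chi(k)\frac{z}{k}\right)
=(1-z)\prod_{k=2}^\infty\left(1-\chi(k)\frac{z}{k}\right),
\]
and in each of the three cases substitute the explicit evaluation already obtained in Examples~1, 2, and~5 (that is, \eqref{2.7a}, \eqref{2.2}, and \eqref{2.7b}). The factor $(1-z)$ cancels the $(1-z)$ in the denominator of Theorem~2, leaving, respectively,
\[
\tfrac{2}{\sqrt{3}}\sin\!\bigl(\tfrac{\pi}{3}(1-z)\bigr),\qquad
\sqrt{2}\,\sin\!\bigl(\tfrac{\pi}{4}(1-z)\bigr),\qquad
2\sin\!\bigl(\tfrac{\pi}{6}(1-z)\bigr).
\]
Applying the angle-subtraction formula $\sin(\alpha-\beta)=\sin\alpha\cos\beta-\cos\alpha\sin\beta$ with $\alpha=\pi/q$ and $\beta=\pi z/q$, the constant prefactors are arranged precisely so that each expression collapses to $\cos(\pi z/q)-c_q\sin(\pi z/q)$, where $c_3=1/\sqrt{3}$, $c_4=1$, and $c_6=\sqrt{3}$.

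Now I would expand $\cos(\pi z/q)$ and $\sin(\pi z/q)$ in their Maclaurin series and match against the right-hand side of \eqref{4.4a}, i.e., $1+\sum_{n\geq 1}(-1)^n L_n(\chi)z^n$. The even-power coefficients come from the cosine and give $L_{2n}(\chi)=\frac{(-1)^n}{(2n)!}(\pi/q)^{2n}$; the odd-power coefficients come from $-c_q\sin(\pi z/q)$ and, after pulling a sign from $(-1)^{2n+1}$, yield $L_{2n+1}(\chi)=\frac{(-1)^n c_q}{(2n+1)!}(\pi/q)^{2n+1}$. This reproduces \eqref{4.4b}, \eqref{4.4c}, \eqref{4.4d} exactly.

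There is no real obstacle here: once Theorem~2 has been applied, the argument is just a Taylor expansion followed by matching of coefficients. The only point that deserves a line of care is the justification of interchanging the infinite product with term-by-term power series comparison; this is standard because \eqref{4.4a} converges absolutely in a neighborhood of $z=0$ (the series $\sum\chi(k)z/k$ converges conditionally at $z=1$ and absolutely for $|z|<1$ once one groups periods, so the product defines a holomorphic function of $z$ near the origin), so coefficient comparison with the analytic function $\cos(\pi z/q)-c_q\sin(\pi z/q)$ is legitimate.
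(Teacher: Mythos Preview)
Your proof is correct and follows essentially the same approach as the paper: multiply \eqref{2.7a}, \eqref{2.2}, \eqref{2.7b} by $1-z$, expand the resulting single sine factor as a Maclaurin series, and equate coefficients with \eqref{4.4a}. Your use of the angle-subtraction formula to reduce each case to $\cos(\pi z/q)-c_q\sin(\pi z/q)$ is simply an explicit way of carrying out the Maclaurin expansion that the paper leaves to the reader.
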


\begin{proof}
We multiply both sides of the identities \eqref{2.7a}, \eqref{2.2}, and 
\eqref{2.7b} by $1-z$ and compute the Maclaurin expansions of the right-hand
sides. Upon equating coefficients of $z^n$ with the right-hand side of 
\eqref{4.4a}, we obtain the six identities in question.
\end{proof}

\noindent
{\bf Example~8.} If we take $n=0$ in the second identities in \eqref{4.4b} and
\eqref{4.4c}, we recover the two identities in Example~7.

\medskip
To simplify notation in what follows, we set
\begin{equation}\label{4.5}
f_j(z) := \Gamma\left(\frac{j-\chi(j)z}{q}\right)^{-1},\qquad
1\leq j\leq q-1,\; \gcd(j,q)=1.
\end{equation}

Before giving an explicit identity for $L_n(\chi)$ in the general case, we 
state and prove an intermediate result. 

\begin{lemma} 
Let $\chi$ be a primitive nonprincipal Dirichlet character with conductor $q$.
Then for all integers $n\geq 1$ we have
\begin{equation}\label{4.6}
L_n(\chi) = (-1)^n\frac{(2\pi)^{\varphi(q)/2}}{\epsilon(q)}
{\sum}^*\prod_{j\in\Phi}\frac{1}{k_j!}f_j^{(k_j)}(0),
\end{equation}
with the index set $\Phi:=\{j\mid 1\leq j\leq q-1, \gcd(j,q)=1\}$, and where
the summation $\sum^*$ in \eqref{4.6} is over all $k_j (j\in\Phi)$ that sum to $n$.
\end{lemma}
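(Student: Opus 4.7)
The plan is to read off the coefficient of $z^n$ on both sides of Theorem~1, after absorbing the $1-z$ factor into the left-hand product.

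First, I would rewrite \eqref{2.3} by multiplying both sides by $1-z$. Since $\chi(1)=1$, the factor $1-z$ is precisely the $k=1$ term missing from the product in \eqref{2.3}, so
\[
\prod_{k=1}^\infty\left(1-\chi(k)\frac{z}{k}\right)
= \frac{(2\pi)^{\varphi(q)/2}}{\epsilon(q)}\prod_{j\in\Phi} f_j(z),
\]
with $f_j(z)$ as in \eqref{4.5}. Combining this with the generating-function identity \eqref{4.4a}, I get
\[
\sum_{n=0}^\infty (-1)^n L_n(\chi)\,z^n
= \frac{(2\pi)^{\varphi(q)/2}}{\epsilon(q)}\prod_{j\in\Phi} f_j(z),
\]
where $L_0(\chi):=1$.

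Next I would expand each $f_j$ as a Maclaurin series,
\[
f_j(z) = \sum_{k=0}^\infty \frac{f_j^{(k)}(0)}{k!}\,z^k,
\]
which is valid in a neighborhood of $z=0$ since $1/\Gamma$ is entire and $\chi(j)\neq 0$ for $j\in\Phi$. Taking the Cauchy product of the $|\Phi|=\varphi(q)$ series and collecting the coefficient of $z^n$ yields
\[
[z^n]\prod_{j\in\Phi} f_j(z)
= {\sum}^*\prod_{j\in\Phi}\frac{1}{k_j!}f_j^{(k_j)}(0),
\]
with the starred sum running over all tuples $(k_j)_{j\in\Phi}$ of nonnegative integers satisfying $\sum_{j\in\Phi}k_j = n$. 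Equating coefficients of $z^n$ and multiplying by $(-1)^n$ delivers \eqref{4.6}.

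There is essentially no obstacle here; the only point requiring a word of care is the justification that multiplying out the infinitely many series is legitimate, which follows from the fact that each $f_j$ is entire, so the product $\prod_{j\in\Phi} f_j(z)$ has a convergent Maclaurin expansion whose coefficients are given by the finite Cauchy product above (the index set $\Phi$ is finite). Everything else is bookkeeping between the two power-series identities.
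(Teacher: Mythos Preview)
Your proof is correct and follows essentially the same route as the paper: multiply \eqref{2.3} by $1-z$, identify the left-hand side with the generating series \eqref{4.4a}, and read off the coefficient of $z^n$ on the right. The only cosmetic difference is that the paper phrases the extraction of the $z^n$-coefficient via the Leibniz rule for the $n$th derivative of $\prod_{j\in\Phi}f_j(z)$ at $z=0$ (their \eqref{4.8}), whereas you phrase it as a Cauchy product of the Maclaurin series of the $f_j$; these are of course the same computation.
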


\begin{proof}
As we did in the proof of Corollary~2, we multiply both sides of \eqref{2.3} by
$1-z$. With the aim of equating coefficients of $z^n$, we see that on the
left-hand side we have the expansion \eqref{4.4a}

For the right-hand side of \eqref{2.3}, we first note that by the Leibniz rule
(the general product rule) we have
\begin{equation}\label{4.8}
\frac{d^n}{dz^n}\prod_{j\in\Phi}\left.f_j(z)\right|_{z=0}
= {\sum}^*\frac{n!}{\prod_{j\in\Phi}k_j!}\prod_{j\in\Phi}f_j^{(k_j)}(0),
\end{equation}
where $\sum^*$ is the same as in the statement of the lemma. Finally, using a
Maclaurin expansion of the product on the right of \eqref{2.3}, canceling the
term $n!$, and equating coefficients of $z^n$ with \eqref{4.4a}, we obtain the
desired identity \eqref{4.6}.
\end{proof}

Next we require a well-known and important combinatorial object, namely the
{\it partial\/} (or {\it incomplete\/}) {\it exponential Bell polynomial} 
defined by 
\begin{equation}\label{4.9}
B_{n,k}(x_1,x_2,\ldots,x_{n-k+1})
=\sum\frac{n!}{j_1!\dots j_{n-k+1}!}
\left(\frac{x_1}{1!}\right)^{j_1}\cdots
\left( \frac{x_{n-k+1}}{(n-k+1)!} \right)^{j_{n-k+1}},
\end{equation}
where the summation is taken over all nonnegative integers 
$j_1,j_2,\ldots,j_{n-k+1}$ satisfying the two conditions
\begin{align*}
j_1+2j_2+\dots+(n-k+1)j_{n-k+1}&=k,\\
j_1+j_2+\dots+j_{n-k+1}&=n;
\end{align*}
see, e.g., \cite[Sect.~3.3]{Com}, and \cite[p.~301]{Com} for a table.

\medskip
\noindent
{\bf Example~9.} Three general cases of \eqref{4.9} are easy to obtain: 
(i) When $k=0$, then 
the sum in \eqref{4.9} is empty; (ii) when $k=1$, then $j_n=1$ provides the 
only summand; (iii) when $k=n$, then $j_1=n$ provides the only summand. Hence 
for all $n\geq1$ we have
\[
B_{n,0}(x_1,x_2,\ldots,x_{n+1})=0,\qquad B_{n,1}(x_1,x_2,\ldots,x_{n})=x_n,
\qquad B_{n,n}(x_1)=x_1^n.
\]
The smallest case not belonging to these sequences is 
$B_{3,2}(x_1,x_2)=3x_1x_2$.

\medskip
An important application of Bell polynomials, 
which can also be found in \cite{Com}, is
Fa\`a di Bruno's formula on higher derivatives of composite functions:

Let $f$ and $g$ be functions for which all relevant derivatives are defined, and
as usual let $g'(x), g''(x),\ldots, g^{(k)}(x)$ be the first to the 
$k$th derivatives of $g$. Then
\begin{equation}\label{4.10}
\frac{d^n}{dx^n}f(g(x))
=\sum_{k=1}^n f^{(k)}(g(x))\cdot B_{n,k}(g'(x),g''(x),\ldots,g^{(n-k+1)}(x)).
\end{equation}
It is clear from \eqref{4.5} and \eqref{4.6} that the second part of the 
following application of Fa\`a di Bruno's formula is relevant to Lemma~3; the
first part will be used later. 

\begin{lemma}
For any positive integer $n$ we have
\begin{align}
\frac{d^n}{dx^n}\Gamma(y)
&= \Gamma(y)\sum_{k=1}^n B_{n,k}(\psi(y),\psi_1(y),\ldots,\psi_{n-k}(y)),\label{4.11}\\
\frac{d^n}{dx^n}\frac{1}{\Gamma(y)} &= \frac{1}{\Gamma(y)}
\sum_{k=1}^n(-1)^k B_{n,k}(\psi(y),\psi_1(y),\ldots,\psi_{n-k}(y)),\label{4.12}
\end{align}
where $\psi(y)$ is the digamma function defined in \eqref{4.1}, and $\psi_j(y)$,
$j\geq 1$, is the polygamma function of order $j$, i.e., 
$\psi_j(y)=\psi^{(j)}(y)$. We also set $\psi_0(y)=\psi(y)$.
\end{lemma}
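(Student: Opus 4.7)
The plan is to deduce both identities as direct applications of Fa\`a di Bruno's formula \eqref{4.10}, using the exponential of $\log\Gamma$ to turn $\Gamma(y)$ and $1/\Gamma(y)$ into compositions whose outer derivatives are trivial to compute. All derivatives below are taken with respect to $y$ (the $x$ in the statement should be read as $y$).

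First I would write $\Gamma(y) = \exp(\log\Gamma(y))$ and apply \eqref{4.10} with outer function $f(u) = e^u$ and inner function $g(y) = \log\Gamma(y)$. Because $f^{(k)}(u) = e^u$ for every $k\ge 1$, the factor $f^{(k)}(g(y))$ collapses uniformly to $\Gamma(y)$ and pulls out of the sum in \eqref{4.10}. The derivatives of the inner function are, directly from \eqref{4.1} and the definition of the polygamma functions,
\[
g'(y) = \frac{\Gamma'(y)}{\Gamma(y)} = \psi(y) = \psi_0(y), \qquad g^{(j+1)}(y) = \psi_j(y) \quad (j\ge 1).
\]
Thus the list $(g'(y),g''(y),\ldots,g^{(n-k+1)}(y))$ that enters $B_{n,k}$ in \eqref{4.10} is exactly $(\psi(y),\psi_1(y),\ldots,\psi_{n-k}(y))$, and \eqref{4.11} drops out.

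For \eqref{4.12} I would repeat the argument with the same inner function $g(y) = \log\Gamma(y)$ but with outer function $f(u) = e^{-u}$, so that $f(g(y)) = 1/\Gamma(y)$. Now $f^{(k)}(u) = (-1)^k e^{-u}$, so $f^{(k)}(g(y)) = (-1)^k/\Gamma(y)$; the arguments of the Bell polynomials are the same polygamma values as before. Pulling $1/\Gamma(y)$ out of the sum produces the sign $(-1)^k$ inside and yields \eqref{4.12}.

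There is no substantive obstacle; the only care needed is the index bookkeeping, namely that $B_{n,k}$ takes $n-k+1$ arguments and these must correspond to $g^{(1)},\ldots,g^{(n-k+1)}$, i.e., to $\psi_0,\psi_1,\ldots,\psi_{n-k}$, exactly as stated. (The convention $B_{n,0}=0$ from Example~9 is also consistent with the fact that \eqref{4.10} and hence \eqref{4.11}, \eqref{4.12} sum only from $k=1$.)
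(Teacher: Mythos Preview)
Your proof is correct and follows essentially the same approach as the paper: write $\Gamma(y)=e^{\log\Gamma(y)}$ (respectively $1/\Gamma(y)=e^{-\log\Gamma(y)}$), apply Fa\`a di Bruno with inner function $\log\Gamma$, and read off the polygamma arguments. Your write-up is in fact a bit more explicit about the index bookkeeping than the paper's own proof.
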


\begin{proof}
We write $\Gamma(y)=e^{\log\Gamma(y)}$ and use Fa\`a di Bruno's formula with
$f(x)=e^x$ and $g(y)=\log\Gamma(y)$. Then $f^{(k)}(g(y))=\Gamma(y)$, and
\eqref{4.11} follows immediately from \eqref{4.10}.
The proof of \eqref{4.12} is almost identical, with the difference that
$f(x)=e^{-x}$, and thus $f^{(k)}(g(y))=(-1)^k/\Gamma(y)$.
\end{proof}

We are now ready to state and prove the main result of this section.

\begin{theorem}
Let $\chi$ be a primitive nonprincipal Dirichlet character with conductor $q$.
Then for all integers $n\geq 1$ we have
\begin{equation}\label{4.13}
L_n(\chi) = \frac{1}{q^n}{\sum}^*\prod_{j\in\Phi}\frac{\chi(j)^{k_j}}{k_j!}
\sum_{k=1}^{k_j}(-1)^k B_{k_j,k}\left(\psi(\tfrac{j}{q}),\psi_1(\tfrac{j}{q}),\ldots,\psi_{k_j-k}(\tfrac{j}{q})\right),
\end{equation}
with the index set $\Phi:=\{j\mid 1\leq j\leq q-1, \gcd(j,q)=1\}$, and where
the summation $\sum^*$ is over all $k_j (j\in\Phi)$ that sum to $n$.
\end{theorem}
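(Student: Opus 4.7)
The plan is to combine Lemma 3 with Lemma 4 by computing the Taylor coefficients $f_j^{(k_j)}(0)$ directly. Since $f_j(z)=1/\Gamma(y)$ evaluated at the \emph{linear} argument $y=(j-\chi(j)z)/q$, iterating the chain rule yields
\[
f_j^{(k_j)}(0) = \left(\frac{-\chi(j)}{q}\right)^{k_j}\cdot\left.\frac{d^{k_j}}{dy^{k_j}}\frac{1}{\Gamma(y)}\right|_{y=j/q},
\]
and the inner derivative is precisely what identity \eqref{4.12} in Lemma 4 evaluates in terms of Bell polynomials and polygamma values at $y=j/q$.

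Substituting this expression into the identity \eqref{4.6} of Lemma 3, the common factor $1/\Gamma(j/q)$ produced by \eqref{4.12} can be pulled out of the product over $j\in\Phi$. Its total contribution is $\prod_{j\in\Phi}\Gamma(j/q)^{-1}$, which by Lemma 2 (equation \eqref{3.5}) equals $\epsilon(q)/(2\pi)^{\varphi(q)/2}$ and thus cancels the prefactor in \eqref{4.6}. Next, the chain-rule factor $(-1/q)^{k_j}$ combined across $j\in\Phi$ collapses to $(-1)^n q^{-n}$ thanks to the constraint $\sum_{j\in\Phi} k_j=n$, and the leading $(-1)^n$ in \eqref{4.6} kills this sign. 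What is left is the character product $\prod_{j\in\Phi}\chi(j)^{k_j}$ together with the inner Bell-polynomial sums, giving exactly \eqref{4.13}.

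The main difficulty is largely bookkeeping: tracking the three sources of signs, namely the $(-1)^n$ in Lemma 3, the chain-rule factor $(-\chi(j)/q)^{k_j}$, and the alternating $(-1)^k$ inside the Bell-polynomial expansion of \eqref{4.12}, and checking that they telescope as claimed. A minor subtlety concerns the $k_j=0$ edge case, where $f_j^{(0)}(0)=1/\Gamma(j/q)$ must be read from \eqref{4.12} under the convention that an empty inner sum contributes the factor $1$; with that convention the $j\in\Phi$ indices with $k_j=0$ are accounted for uniformly in \eqref{4.13}.
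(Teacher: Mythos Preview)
Your proof is correct and follows essentially the same route as the paper: compute $f_j^{(k_j)}(0)$ via the chain rule, apply \eqref{4.12}, pull out the common factor $\prod_{j\in\Phi}\Gamma(j/q)^{-1}$ and evaluate it with Lemma~2, and use $\sum_{j\in\Phi}k_j=n$ to collapse the remaining powers and signs into $q^{-n}$. Your explicit remark about the $k_j=0$ convention (empty inner sum read as $1$) is a useful clarification that the paper leaves implicit.
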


\begin{proof}
By \eqref{4.5} we have 
\[
f_j^{(k_j)}(0) = \left(\frac{-\chi(j)}{q}\right)^{k_j}
\frac{d^{k_j}}{dy^{k_j}}\left.\frac{1}{\Gamma(y)}\right|_{y=j/q}.
\]
We combine this with \eqref{4.12} and \eqref{3.6}, and note that by \eqref{3.5}
and the fact that $\sum^*k_j=n$, we have
\[
\prod_{j\in\Phi}\frac{(-1)^{k_j}}{q^{k_j}\Gamma(\tfrac{j}{k})}
=\frac{(-1)^n}{q^n}\cdot\frac{\epsilon(q)}{(2\pi)^{\varphi(q)/2}}.
\]
The desired identity \eqref{4.13} now follows immediately from \eqref{4.6}.
\end{proof}

We illustrate this result with two examples.

\medskip
\noindent
{\bf Example~10.} Let $n=1$ in \eqref{4.13}. Then the sum $\sum^*$ is over all
$j\in\Phi$, with $k_j=1$, and the product that follows has the single factor 
for the index $j$ that belongs to $k_j=1$. Since $B_{1,1}(x_1)=x_1$ (see
Example~9), Theorem~3 gives
\[
L_1(\chi) = \frac{1}{q}\sum_{j\in\Phi}\chi(j)(-1)\psi(\tfrac{j}{q}),
\]
which agrees with \eqref{4.2}.

\medskip
\noindent
{\bf Example~11.}  Let $n=2$ in \eqref{4.13}. Then $\sum^*$ consists of two
types of sums:

(i) $k_j=2$ for $j\in\Phi$. This gives the summands
\begin{align*}
\sum_{j\in\Phi}&\frac{\chi(j)^2}{2}
\sum_{j=1}^2(-1)^jB_{2,j}(\psi(\tfrac{j}{q}),\ldots,\psi_{2-k}(\tfrac{j}{q}))\\
&=\sum_{j\in\Phi}\frac{\chi(j)^2}{2}
\left(-B_{2,1}(\psi(\tfrac{j}{q}),\psi_1(\tfrac{j}{q}))\right)
+B_{2,2}(\psi(\tfrac{j}{q}))\\
&=\sum_{j\in\Phi}\frac{\chi(j)^2}{2}
\left(-\psi_1(\tfrac{j}{q})+\psi(\tfrac{j}{q})^2\right),
\end{align*}
where we have used Example~9.

(ii) We sum over $j,\ell\in\Phi$, $j<\ell$; then $k_j=k_{\ell}=1$. For 
simplicity we consider all $1\leq j<\ell\leq q-1$; we can do this since 
$\chi(j)=0$ when $j\not\in\Phi$. Hence we get the summands
\[
\sum_{1\leq j<\ell\leq q-1}\frac{\chi(j)}{1}(-1)B_{1,1}(\psi(\tfrac{j}{q}))
\frac{\chi(\ell)}{1}(-1)B_{1,1}(\psi(\tfrac{\ell}{q}))\\
=\sum_{1\leq j<\ell\leq q-1}\chi(j\ell)\psi(\tfrac{j}{q})\psi(\tfrac{\ell}{q}),
\]
where we have again used Example~9. These two parts, together with \eqref{4.13}
and after some simple manipulation, give 
\begin{equation}\label{4.14}
L_2(\chi)
= \frac{1}{2q^2}\left[\left(\sum_{j=1}^{q-1}\chi(j)\psi(\tfrac{j}{q})\right)^2
-\sum_{j=1}^{q-1}\chi(j)^2\psi_1(\tfrac{j}{q})\right];
\end{equation}
this concludes Example~11.

\medskip
We can somewhat simplify \eqref{4.14} if we note that $\chi(q-j)^2=\chi(j)^2$ 
and that differentiating \eqref{4.3a} gives
\[
\psi_1(z)+\psi_1(1-z) = \frac{\pi^2}{\sin^2(\pi z)}.
\]
Then \eqref{4.14} can be rewritten as follows.

\begin{corollary}
Let $\chi$ be a primitive nonprincipal character with conductor $q>1$. Then
\begin{equation}\label{4.15}
\sum_{1\leq k<\ell}\frac{\chi(k)}{k}\frac{\chi(\ell)}{\ell}
= \frac{1}{2q^2}\left[\left(\sum_{j=1}^{q-1}\chi(j)\psi(\tfrac{j}{q})\right)^2
-\sum_{j=1}^{\lfloor\frac{q-1}{2}\rfloor}
\left(\frac{\chi(j)\pi}{\sin(\tfrac{\pi j}{q})}\right)^2\right],
\end{equation}
and when $\chi$ is odd, then
\begin{equation}\label{4.16}
\sum_{1\leq k<\ell}\frac{\chi(k)}{k}\frac{\chi(\ell)}{\ell}
= \frac{\pi^2}{2q^2}\left[\left(\sum_{j=1}^{\lfloor\frac{q-1}{2}\rfloor}
\chi(j)\cot(\tfrac{\pi j}{q})\right)^2
-\sum_{j=1}^{\lfloor\frac{q-1}{2}\rfloor}
\left(\frac{\chi(j)}{\sin(\tfrac{\pi j}{q})}\right)^2\right].
\end{equation}
\end{corollary}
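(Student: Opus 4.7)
My plan is to derive \eqref{4.15} directly from the intermediate identity \eqref{4.14} of Example~11 via a symmetrization argument, and then to deduce \eqref{4.16} from \eqref{4.15} by invoking Corollary~2 in the odd case.

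To obtain \eqref{4.15}, I will focus on the second inner sum in \eqref{4.14}, namely $\sum_{j=1}^{q-1}\chi(j)^2\psi_1(j/q)$. Since $\chi(j)=0$ whenever $\gcd(j,q)>1$, only indices coprime to $q$ contribute; in particular the possible middle index $j=q/2$ (for even $q$) drops out automatically. I then group the remaining indices into pairs $\{j,q-j\}$ for $1\leq j\leq\lfloor(q-1)/2\rfloor$. Regardless of the parity of $\chi$, we have $\chi(q-j)^2=(\pm\chi(j))^2=\chi(j)^2$, so each pair combines to $\chi(j)^2\bigl[\psi_1(j/q)+\psi_1(1-j/q)\bigr]$. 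Applying the reflection identity $\psi_1(z)+\psi_1(1-z)=\pi^2/\sin^2(\pi z)$ stated in the paragraph just before Corollary~4 collapses each pair to $\bigl(\chi(j)\pi/\sin(\pi j/q)\bigr)^2$, which is exactly the second bracketed sum in \eqref{4.15}. The first bracketed term (the square of the digamma sum) carries over from \eqref{4.14} unchanged, so \eqref{4.15} follows.

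For \eqref{4.16}, I will assume $\chi$ is odd and use Corollary~2 to reexpress the digamma sum. Combining \eqref{4.2} and \eqref{4.3} gives
$$\sum_{j=1}^{q-1}\chi(j)\psi(j/q) \;=\; -q\,L_1(\chi) \;=\; -\pi\sum_{j=1}^{\lfloor(q-1)/2\rfloor}\chi(j)\cot(\pi j/q).$$
Squaring this identity and substituting into \eqref{4.15} converts the first bracketed term into $\pi^2$ times the squared cotangent sum. The second bracketed term already carries a factor $\pi^2$, so pulling $\pi^2$ out of the bracket immediately produces \eqref{4.16}.

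I do not anticipate a real obstacle here; the only subtlety is checking that the pairing $j\leftrightarrow q-j$ behaves correctly when $q$ is even, but this is handled automatically by the vanishing of $\chi$ on residues not coprime to $q$. Everything else is routine manipulation.
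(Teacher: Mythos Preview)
Your proposal is correct and follows essentially the same route as the paper: you take \eqref{4.14} as given, pair $j$ with $q-j$ using $\chi(q-j)^2=\chi(j)^2$ together with the reflection identity for $\psi_1$ to obtain \eqref{4.15}, and then pass to \eqref{4.16} via the equality of \eqref{4.2} and \eqref{4.3} for odd $\chi$. The paper does exactly this, phrasing the last step as ``\eqref{4.16} follows from \eqref{4.15}, just as \eqref{4.3} follows from \eqref{4.2}.''
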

The last identity follows from \eqref{4.15}, just as \eqref{4.3} follows from 
\eqref{4.2}. Corollary~4 can be seen as the double sum analogue of 
Corollary~2.

To conclude this section, we derive an evaluation for $L_n(\chi)$ for odd 
$\chi$, which is quite different from \eqref{4.13}. For what follows, we set
\[
\Phi_q := \{j\mid 1\leq j\leq\tfrac{q-1}{2},\gcd(j,q)=1\}\quad\hbox{and}\quad
\ell:=|\Phi_q|=\tfrac{1}{2}\varphi(q).
\]

\begin{theorem}
Let $\chi$ be a primitive nonprincipal odd Dirichlet character with conductor 
$q$. Then for all integers $n\geq 1$ we have
\begin{equation}\label{4.17}
L_n(\chi) = \frac{i^{n-\ell}}{\epsilon(q)n!}\bigl(\frac{\pi}{q}\bigr)^n
\sum_{\delta}P(\delta)e^{i\pi j(\delta)/q}
\left(\delta_1\chi(j_1)+\dots+\delta_{\ell}\chi(j_{\ell})\right)^n,
\end{equation}
where the sum is taken over all $2^\ell$ values of 
$\delta=(\delta_1,\ldots,\delta_{\ell})$, $\delta_{\lambda}=\pm 1$, and where
$P(\delta):=\delta_1\dots\delta_{\ell}$, $j_{\lambda}\in\Phi_q$, 
$\lambda=1,\ldots,\ell$, and 
$j(\delta):=\delta_1j_1+\dots+\delta_{\ell}j_{\ell}$.
\end{theorem}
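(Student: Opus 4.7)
The plan is to extract the coefficient of $z^n$ directly from equation~\eqref{2.4}. After multiplying both sides of \eqref{2.4} by $(1-z)$, the left-hand side becomes $\prod_{k=1}^\infty(1-\chi(k)z/k)$ (using $\chi(1)=1$), which by \eqref{4.4a} equals $1+\sum_{n\ge 1}(-1)^nL_n(\chi)z^n$. Writing $\Phi_q=\{j_1,\dots,j_\ell\}$, the right-hand side is
\[
\frac{2^\ell}{\epsilon(q)}\prod_{\lambda=1}^\ell \sin\!\left(\pi\,\frac{j_\lambda-\chi(j_\lambda)z}{q}\right),
\]
and the task reduces to expanding this as a power series in $z$.

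The key step is to replace each sine factor by $\sin\theta=(e^{i\theta}-e^{-i\theta})/(2i)$ and expand the $\ell$-fold product over the $2^\ell$ choices indexed by $\delta\in\{\pm 1\}^\ell$. Assigning $\delta_\lambda=+1$ to the choice of $e^{i\theta_\lambda}$ and $\delta_\lambda=-1$ to the choice of $-e^{-i\theta_\lambda}$, the minus sign is absorbed as a factor $\delta_\lambda$, so the product collapses to
\[
\frac{1}{(2i)^\ell}\sum_{\delta}P(\delta)\,e^{i\pi j(\delta)/q}\,\exp\!\Bigl(-iz\,\tfrac{\pi}{q}\bigl(\delta_1\chi(j_1)+\cdots+\delta_\ell\chi(j_\ell)\bigr)\Bigr),
\]
with $P(\delta)$ and $j(\delta)$ as in the statement of the theorem.

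From here I would expand each inner exponential as its Maclaurin series in $z$ and read off the coefficient of $z^n$. The scalar prefactor simplifies as $2^\ell/(\epsilon(q)(2i)^\ell)=1/(\epsilon(q)i^\ell)$, and the $n$th Maclaurin coefficient of $\exp(-iz\pi\chi(\delta)/q)$ contributes $(-i\pi/q)^n/n!$ times $(\delta_1\chi(j_1)+\cdots+\delta_\ell\chi(j_\ell))^n$. Setting the result equal to $(-1)^nL_n(\chi)$ and using $(-1)^n(-i)^n/i^\ell=i^{n-\ell}$ yields \eqref{4.17}.

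The only genuine obstacle is careful bookkeeping of signs and powers of $i$: one must verify that the chosen assignment of $\delta_\lambda$ truly produces $P(\delta)=\delta_1\cdots\delta_\ell$ (rather than its negative), that the exponents assemble into exactly $j(\delta)$ and $\delta_1\chi(j_1)+\cdots+\delta_\ell\chi(j_\ell)$ as defined in the statement, and that the residual $2$'s, $i$'s, and $(-1)$'s collapse cleanly to the prefactor $i^{n-\ell}/(\epsilon(q)n!)\cdot(\pi/q)^n$. No new analytic input beyond \eqref{2.4} and the geometric/combinatorial identity expanding $\prod(e^{i\theta_\lambda}-e^{-i\theta_\lambda})$ is required.
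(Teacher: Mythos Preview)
Your proposal is correct and follows essentially the same route as the paper: multiply \eqref{2.4} by $1-z$ to obtain \eqref{4.4a} on the left, rewrite each sine factor via $\sin\theta=(e^{i\theta}-e^{-i\theta})/(2i)$, expand the resulting $\ell$-fold product over the $2^\ell$ sign choices $\delta$, Maclaurin-expand the surviving exponential in $z$, and equate coefficients. The sign bookkeeping you flag as the only obstacle is exactly what the paper carries out explicitly, arriving at the same simplification $(-1)^n(-i)^n/i^\ell=i^{n-\ell}$.
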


\begin{proof}
Our point of departure is the identity \eqref{2.4}, slightly rewritten in the
form
\begin{equation}\label{4.18}
\prod_{k=1}^\infty\left(1-\chi(k)\frac{z}{k}\right)
=\frac{2^\ell}{\epsilon(q)}\cdot\prod_{\lambda=1}^{\ell}
\sin\left(\frac{\pi}{q}j_\lambda-\chi(j_\lambda)\frac{\pi z}{q}\right),
\end{equation}
with $j_\lambda$ as in the statement of the theorem. Writing the sine function
in terms of the exponential function, the right-hand side of \eqref{4.18}
becomes
\begin{align}
\frac{1}{\epsilon(q)i^\ell}&\prod_{\lambda=1}^{\ell}
\left[\exp\left(\frac{i\pi}{q}j_\lambda-\chi(j_\lambda)\frac{i\pi z}{q}\right)
-\exp\left(-\frac{i\pi}{q}j_\lambda+\chi(j_\lambda)\frac{i\pi z}{q}\right)\right] \label{4.19}\\
&=\frac{1}{\epsilon(q)i^\ell}\sum_{\delta}P(\delta)\prod_{\lambda=1}^{\ell}
\exp\left(\frac{i\pi}{q}\delta_\lambda j_\lambda
-\delta_\lambda\chi(j_\lambda)\frac{i\pi z}{q}\right)\nonumber\\
&=\frac{1}{\epsilon(q)i^\ell}\sum_{\delta}P(\delta)
\left(\prod_{\lambda=1}^{\ell}\exp\left(\frac{i\pi}{q}\delta_\lambda j_\lambda\right)\right)
\left(\prod_{\lambda=1}^{\ell}\exp\left(-\delta_\lambda\chi(j_\lambda)\frac{i\pi z}{q}\right)\right)\nonumber
\end{align}
Now by the definition of $j(\delta)$ we have
\begin{equation}\label{4.20}
\prod_{\lambda=1}^{\ell}\exp\left(\frac{i\pi}{q}\delta_\lambda j_\lambda\right)
=\exp\left(\frac{i\pi j(\delta)}{q}\right),
\end{equation}
and the Maclaurin expansion of the exponential function gives
\begin{align*}
\prod_{\lambda=1}^{\ell}\exp\left(-\delta_\lambda\chi(j_\lambda)\frac{i\pi z}{q}\right)
&=\exp\left(-\frac{i\pi z}{q}
\sum_{\lambda=1}^{\ell}\delta_\lambda\chi(j_\lambda)\right)\\
&=\sum_{\nu=0}^\infty\left(-\frac{i\pi}{q}\right)^\nu
\left(\delta_1\chi(j_1)+\dots+\delta_{\ell}\chi(j_{\ell})\right)^\nu
\frac{z^\nu}{\nu!}.
\end{align*}
We now combine this last identity and \eqref{4.20} with \eqref{4.19} and equate
coefficients of $z^n$ with those on the left of \eqref{4.18}, which are 
$(-1)^nL_n(\chi)$. This gives
\[
(-1)^nL_n(\chi) = \frac{1}{\epsilon(q)i^\ell n!}\left(-\frac{i\pi}{q}\right)^n
\sum_{\delta}P(\delta)e^{i\pi j(\delta)/q}
\left(\delta_1\chi(j_1)+\dots+\delta_{\ell}\chi(j_{\ell})\right)^n.
\]
This is clearly the same as \eqref{4.17}, and the proof is complete.
\end{proof}

\noindent
{\bf Example~12.} Let $q=3$. Then $\ell=1$, $j_1=1$, $\delta=(\delta_1)$ with
$\delta_1=\pm 1$, $j(\delta)=\delta_1j_1=\delta_1$, $P(\delta)=\delta_1$, and
$\epsilon(q)=\sqrt{3}$. Then \eqref{4.17} reduces to
\[
L_n(\chi) = \frac{i^{n-1}}{n!\sqrt{3}}\left(e^{i\pi/3}-(-1)^ne^{-i\pi/3}\right).
\]
This identity combines the two parts of \eqref{4.4b}. This is clear if we 
consider even and odd $n$ separately and use $\sin(\pi/3)=\sqrt{3}/2$,
$\cos(\pi/3)=1/2$. The cases $q=4$ and $q=6$ can be obtained in a similar way.

\section{Multiple $L$-star series}

In the theory of multiple zeta functions and multiple $L$-series one usually
distinguishes between the regular case where, as in \eqref{4.3b} and 
\eqref{4.4}, each summation index is strictly greater than the previous one,
and the ``star analogue", where the inequalities are not strict. Accordingly,
in analogy to \eqref{4.4} we define
\begin{equation}\label{4a.1}
L_n^*(\chi)
:=\sum_{1\leq k_1\leq\dots\leq k_n}\frac{\chi(k_1)}{k_1}\dots\frac{\chi(k_n)}{k_n},
\end{equation}
where, as usual, $\chi$ is a primitive nonprincipal Dirichlet character with
conductor $q$. While the generating function for $L_n(\chi)$, say $G(z)$, is 
given by the infinite product of Theorems~1 and~2 (see \eqref{4.4a}), we will
now see that the generating function for $L_n^*(\chi)$, say $H(z)$, is related
to $G(z)$ by the simple identity
\[
H(z) = \frac{1}{G(-z)}.
\]
Similar relations were previously observed and used by other authors
in different settings; see, e.g., \cite[Sect.~5]{CC} or \cite{Ho2}.

\begin{lemma}
Let $\chi$ be a primitive nonprincipal Dirichlet character. Then
\begin{equation}\label{4a.2}
\prod_{k=1}^\infty\left(1-\chi(k)\frac{z}{k}\right)^{-1}
= 1 + \sum_{n=1}^\infty L_n^*(\chi)z^n.
\end{equation}
\end{lemma}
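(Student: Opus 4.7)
The plan is to expand each factor on the left of \eqref{4a.2} as a geometric series and identify the rearranged coefficient of $z^n$ with $L_n^*(\chi)$. For $|z|<1$ each factor admits
\[
\left(1-\chi(k)\frac{z}{k}\right)^{-1}=\sum_{m=0}^\infty\left(\frac{\chi(k)}{k}\right)^{m}z^{m},
\]
and multiplying these across $k=1,2,\ldots,N$ gives, as the coefficient of $z^n$ in the partial product $P_N(z):=\prod_{k=1}^N(1-\chi(k)z/k)^{-1}$, the finite sum
\[
\sum_{1\le k_1\le\cdots\le k_n\le N}\frac{\chi(k_1)}{k_1}\cdots\frac{\chi(k_n)}{k_n},
\]
since each tuple of nonnegative exponents $(m_1,\ldots,m_N)$ with $\sum_k m_k=n$ corresponds to the weakly increasing sequence $1\le k_1\le\cdots\le k_n\le N$ in which the value $k$ appears $m_k$ times.

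It then remains to pass to the limit $N\to\infty$. On the left, Theorem~1 ensures that $G(z):=\prod_{k\ge 1}(1-\chi(k)z/k)$ is analytic near $0$ with $G(0)=1$, so $1/G(z)$ is analytic in a neighborhood of the origin and equals $\lim_{N\to\infty}P_N(z)$ uniformly on some small disc about $z=0$. Consequently the Maclaurin coefficients of $P_N(z)$ converge to those of $1/G(z)$, and comparing with the finite sum above identifies the limiting coefficient with $L_n^*(\chi)$ as given in \eqref{4a.1}, yielding \eqref{4a.2} and, as a byproduct, the convergence of the series on its right inside that disc.

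The main obstacle is that the series defining $L_n^*(\chi)$ in \eqref{4a.1} is only conditionally convergent (already for $n=1$), so one cannot rearrange the $N\to\infty$ limit with impunity. A clean way around this is to supplement the above by the Cauchy product identity $G(z)\cdot(1/G(z))=1$: this forces the Maclaurin coefficients of $1/G(z)$ to satisfy a triangular linear system in $L_1(\chi),\ldots,L_n(\chi)$ — the familiar Newton-type relations between elementary and complete homogeneous symmetric functions in the formal variables $x_k=\chi(k)/k$. These are exactly the recurrences that the $L_n^*(\chi)$ satisfy when summed in the natural iterated order (innermost index first), so the identification of the coefficients is forced and the lemma follows.
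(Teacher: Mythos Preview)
Your core approach---expanding each factor as a geometric series and reading off the coefficient of $z^n$---is exactly the paper's argument, which is given in two sentences and treats the identity entirely formally.  Your addition is to make the passage to the limit rigorous via the partial products $P_N(z)$ and their locally uniform convergence to $1/G(z)$; the paper does not address convergence at all.

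One comment on your last paragraph: the concern you raise is largely moot.  The coefficient of $z^n$ in $P_N(z)$ is the sum over the simplex $1\le k_1\le\cdots\le k_n\le N$, and letting $N\to\infty$ is precisely the natural iterated interpretation of \eqref{4a.1} (the inner sums are finite, and truncating the outermost index $k_n$ at $N$ is the same as restricting to the simplex).  So the identification of the limiting coefficient with $L_n^*(\chi)$ is immediate, and the detour through the Cauchy product and Newton-type recurrences is unnecessary.  It is also not self-contained as stated: you assert that the iterated $L_n^*(\chi)$ satisfy the same recurrence, but that claim itself would need an argument for conditionally convergent sums, so the proposed workaround does not actually close the gap it was meant to close.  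Dropping that paragraph and noting that the simplex truncation coincides with the iterated sum makes the proof both shorter and cleaner than what you wrote, while remaining more careful than the paper's.
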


\begin{proof}
Expanding the factors on the left-hand side of \eqref{4a.2}, we get
\[
\prod_{k=1}^\infty\left(1+\tfrac{\chi(k)}{k}z+(\tfrac{\chi(k)}{k})^2z^2
+(\tfrac{\chi(k)}{k})^3z^3+\dots\right).
\]
If we fix an $n\geq 1$ and consider the coefficients of $z^n$, we see that
their sum is the right-hand side of \eqref{4a.1}.
\end{proof}

As an immediate consequence, obtained by taking the product of \eqref{4a.2} and
\eqref{4.4a} and equating coefficients of $z^n$, we get the following 
connections between the series $L_n(\chi)$ and $L_n^*(\chi)$.

\begin{corollary}
Let $\chi$ be a primitive nonprincipal Dirichlet character. Then we have for
all $n\geq 1$,
\begin{equation}\label{4a.3}
L_n^*(\chi)+\sum_{j=1}^{n-1}(-1)^jL_j(\chi)L_{n-j}^*(\chi)+(-1)^nL_n(\chi)=0.
\end{equation}
\end{corollary}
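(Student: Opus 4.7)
The plan is to exploit the observation made just before the statement: the generating function $H(z) := 1+\sum_{n\geq 1}L_n^*(\chi)z^n$ supplied by Lemma~4 is the multiplicative reciprocal of the generating function $G(z) := 1+\sum_{n\geq 1}(-1)^nL_n(\chi)z^n$ supplied by \eqref{4.4a}, because both are represented by the infinite products $\prod_{k\ge 1}(1-\chi(k)z/k)^{\pm 1}$. Hence $G(z)H(z)\equiv 1$ as analytic functions in a neighborhood of $z=0$, and extracting the coefficient of $z^n$ on both sides will give the claimed recurrence immediately.

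First I would record that the two series converge absolutely in some disc about the origin. This is routine: the infinite product $G(z)$ is entire in $z$ by Theorem~1 (the right-hand side of \eqref{2.3} is meromorphic with no pole at $z=0$), and $H(z)=1/G(z)$ is analytic at $0$ since $G(0)=1\neq 0$. This absolute convergence justifies forming the Cauchy product.

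Next I would compute the Cauchy product. Setting $a_0=b_0=1$, $a_n=(-1)^nL_n(\chi)$, and $b_n=L_n^*(\chi)$ for $n\geq 1$, the identity $G(z)H(z)=1$ forces $\sum_{j=0}^n a_j b_{n-j}=0$ for every $n\geq 1$. Separating off the terms $j=0$ and $j=n$ gives
\[
L_n^*(\chi)+\sum_{j=1}^{n-1}(-1)^j L_j(\chi)L_{n-j}^*(\chi)+(-1)^nL_n(\chi)=0,
\]
which is exactly \eqref{4a.3}.

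There is no real obstacle here; the only delicate point is the convergence justification needed to pass from the formal product of series to an honest equality of analytic functions, and that is entirely standard once one notes that $G$ is analytic and nonvanishing at $z=0$. The argument does not require any specific properties of $\chi$ beyond those already used in obtaining \eqref{4.4a} and \eqref{4a.2}, which is why the corollary holds in the stated generality.
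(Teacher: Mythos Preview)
Your proof is correct and is exactly the approach the paper takes: multiply the generating functions \eqref{4.4a} and \eqref{4a.2}, observe that the product is identically $1$, and equate coefficients of $z^n$. You supply a bit more detail on the analytic justification than the paper does, but the argument is the same; note only that the lemma giving \eqref{4a.2} is Lemma~5 in the paper's numbering, not Lemma~4.
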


For $n=1$, this reduces to the obvious fact that $L^*_1(\chi)=L_1(\chi)$
as single series.

Using \eqref{4a.2} and the identities \eqref{2.2}, \eqref{2.7a} and \eqref{2.7b},
we now derive $L^*$-analogues of the identities in Corollary~3. This case is
more involved and requires Bernoulli and Euler numbers and polynomials.
The Bernoulli polynomials $B_n(z)$ can be defined by the generating function
\begin{equation}\label{4a.4}
\frac{xe^{zx}}{e^x-1} = \sum_{n=0}^\infty B_n(z)\frac{x^n}{n!}\qquad(|x|<2\pi),
\end{equation}
and the Bernoulli numbers are then $B_n:=B_n(0)$, $n=0, 1, 2,\dots$. The Euler
polynomials $E_n(z)$ can be defined by
\begin{equation}\label{4a.5}
\frac{2e^{zx}}{e^x+1} = \sum_{n=0}^\infty E_n(z)\frac{x^n}{n!}\qquad(|x|<\pi),
\end{equation}
and the Euler numbers are then defined by
\begin{equation}\label{4a.6}
E_n := 2^nE_n(\tfrac{1}{2}),\qquad n=0,1,2,\ldots.
\end{equation}
Numerous properties of these numbers and polynomials can be found, e.g., in
\cite[Ch.~24]{DLMF}, and some special values are listed in Table~1 below.

\begin{corollary}
$(a)$ If $\chi$ is the nonprincipal character with $q=3$, then
\begin{align}
L_{2n}^*(\chi) &= (-1)^{n+1}3(2^{2n}+1)\frac{B_{2n+1}(\tfrac{1}{3})}{(2n+1)!}\pi^{2n}\qquad(n\geq 1),\label{4a.7}\\
L_{2n+1}^*(\chi) &=
(-1)^n\frac{\sqrt{3}}{2}(2^{2n+1}-1)(3^{2n+2}-1)\frac{B_{2n+2}}{(2n+2)!}\left(\frac{\pi}{3}\right)^{2n+1}\quad(n\geq 0)\label{4a.8}.
\end{align}
$(b)$ If $\chi$ is the nonprincipal character with $q=4$, then
\begin{equation}\label{4a.9}
L_n^*(\chi) = (-1)^{\lfloor\frac{n+1}{2}\rfloor}\frac{E_n(\tfrac{1}{4})}{n!}\pi^n\qquad(n\geq 1).
\end{equation}
$(c)$ If $\chi$ is the nonprincipal character with $q=6$, then
\begin{align}
L_{2n}^*(\chi) &= (-1)^n\frac{1}{4}(3^{2n+1}+1)
\frac{E_{2n}}{(2n)!}\left(\frac{\pi}{6}\right)^{2n}\qquad(n\geq 1),\label{4a.10}\\
L_{2n+1}^*(\chi) &=
(-1)^{n+1}\frac{\sqrt{3}}{2}\frac{E_{2n+1}(\tfrac{1}{6})}{(2n+1)!}\pi^{2n+1}\qquad(n\geq 0).\label{4a.11}
\end{align}
\end{corollary}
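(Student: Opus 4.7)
The plan is to apply Lemma~4 to the explicit products of Examples~1, 2, and~5, which give
\[
H_q(z) \;:=\; \prod_{k=1}^{\infty}\!\left(1-\chi(k)\frac{z}{k}\right)^{\!-1} \;=\; \frac{c_q}{\sin(\pi(1-z)/q)}, \qquad q\in\{3,4,6\},
\]
with $c_3=\tfrac{\sqrt{3}}{2}$, $c_4=\tfrac{1}{\sqrt{2}}$, $c_6=\tfrac{1}{2}$. By Lemma~4 the left side is $1+\sum_{n\geq1}L_n^*(\chi)z^n$, so the task reduces to extracting the Maclaurin coefficients of these cosecants; I will use two different devices depending on~$q$.

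For $q=4$ and the odd-in-$z$ part of $H_6(z)$, the key step is to substitute $t=i\pi z$ into the Euler polynomial generating function~\eqref{4a.5} at $\zeta=1/4$ and $\zeta=1/6$ respectively, obtaining
\[
\sum_{n\geq0}E_n(\zeta)\frac{(i\pi z)^n}{n!} \;=\; \frac{2e^{i\pi z\zeta}}{e^{i\pi z}+1} \;=\; \frac{e^{-i\pi z(1/2-\zeta)}}{\cos(\pi z/2)}.
\]
Elementary trigonometric manipulation gives $H_4(z) = (\cos(\pi z/4)+\sin(\pi z/4))/\cos(\pi z/2)$, and (using $\cos(\pi z/2) = \cos(\pi z/6)(4\cos^2(\pi z/6)-3)$) $H_6(z) = (\tfrac12+\cos(\pi(1-z)/3))/\cos(\pi z/2)$. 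Since even-$n$ terms in the $\zeta=1/4$ series are real and odd-$n$ terms are imaginary, a parity split identifies $H_4(z)$ as the real minus the imaginary part, giving~\eqref{4a.9}; the same parity split at $\zeta=1/6$ identifies the odd-in-$z$ part of $H_6(z)$ (a $\sqrt{3}/2$ multiple of the imaginary part), giving~\eqref{4a.11}. The remaining even-in-$z$ piece $\tfrac12\sec(\pi z/2)+\tfrac12\cos(\pi z/3)/\cos(\pi z/2)$ of $H_6(z)$ is handled by the classical expansion $\sec x=\sum(-1)^n E_{2n}x^{2n}/(2n)!$ together with the real part of the $\zeta=1/6$ series, combined via the Euler distribution identity $(1+3^{2n})E_{2n}(\tfrac12) = 2\cdot 3^{2n}E_{2n}(\tfrac16)$ and $E_{2n}=2^{2n}E_{2n}(\tfrac12)$, yielding~\eqref{4a.10}.

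For $q=3$ the substitution above produces a series in $(1-z)$ rather than $z$, so I would instead use the Mittag--Leffler expansion $\pi\csc(\pi u)=\sum_{n\in\mathbb{Z}}(-1)^n/(u-n)$ with $u=(1-z)/3$. Geometric expansion of each summand in $z$ gives
\[
L_n^*(\chi) \;=\; \frac{3\sqrt{3}}{2\pi}\sum_{m\in\mathbb{Z}}\frac{(-1)^m}{(1-3m)^{n+1}} \qquad (n\geq 1).
\]
The integers $j:=1-3m$ cover those $\equiv 1\pmod 3$, with $j\equiv 1\pmod 6$ when $m$ is even and $j\equiv 4\pmod 6$ when $m$ is odd. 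Pairing each $j<0$ with $-j>0$ recombines the four positive-$j$ subsums into $L(n+1,\chi)$ times a $2$-adic factor when $n+1$ is odd, and into $\zeta(n+1)$ times a product of $2$- and $3$-adic factors when $n+1$ is even. Substituting the classical evaluations
\[
L(2m+1,\chi) \;=\; \frac{(-1)^{m+1}(2\pi)^{2m+1}B_{2m+1}(1/3)}{\sqrt{3}\,(2m+1)!}, \qquad \zeta(2m) \;=\; \frac{(-1)^{m-1}(2\pi)^{2m}B_{2m}}{2\,(2m)!},
\]
and simplifying yields~\eqref{4a.7} and~\eqref{4a.8}. The main obstacle is the careful residue-class bookkeeping for $q=3$: it is routine, but the multiplier factors $(2^{2n}+1)$ and $(2^{2n+1}-1)(3^{2n+2}-1)$ emerge only after consistently tracking the $2$-adic contributions through the pairing step.
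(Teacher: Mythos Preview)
Your argument is correct, and for parts (b) and (c) it is actually cleaner than the paper's. The paper handles $q=4$ by first invoking the Springer-number expansion of $1/(\cos x-\sin x)$ and then quoting Hoffman's identity $Q_n(1)=(-1)^{\lfloor(n+1)/2\rfloor}4^nE_n(1/4)$; your direct substitution $t=i\pi z$ into the Euler-polynomial generating function at $\zeta=1/4$, followed by the real/imaginary parity split, reaches \eqref{4a.9} without any auxiliary sequence. For $q=6$ the paper only says ``similar to part (a), left to the reader'', whereas you supply a complete derivation via the same Euler-polynomial trick plus the distribution relation $(1+3^{2n})E_{2n}(\tfrac12)=2\cdot 3^{2n}E_{2n}(\tfrac16)$.

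The genuine methodological difference is in part (a). The paper stays on the generating-function side: it rewrites $\tfrac{\sqrt3}{2}\csc(\tfrac{\pi}{3}(1-z))$ by trigonometric addition and partial fractions as a sum of two terms of the form $\tfrac{3/2}{1+e^x+e^{-x}}$ (handled by Glaisher's expansion in $B_{2n+1}(\tfrac13)$) and two cosecant terms (handled by the standard $\csc$ expansion in Bernoulli numbers). You instead pass through the Mittag--Leffler partial-fraction expansion of $\csc$, expand each pole geometrically, and recognise the resulting Dirichlet series as $(1+2^{1-s})L(s,\chi)$ for odd $s$ and $(1-2^{1-s})(1-3^{-s})\zeta(s)$ for even $s$, finally inserting the classical closed forms for $L(2m+1,\chi_3)$ and $\zeta(2m)$. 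Both routes are valid; the paper's is a self-contained power-series manipulation, while yours trades that for an appeal to known special values --- which has the advantage of making transparent \emph{why} the factors $(2^{2n}+1)$ and $(2^{2n+1}-1)(3^{2n+2}-1)$ appear (they are Euler factors at $2$ and $3$), but requires the $L(2m+1,\chi_3)$ evaluation as input. One small point to tighten in a final write-up: the termwise geometric expansion after Mittag--Leffler needs a word of justification (uniform convergence on $|z|\le r<1$ suffices).
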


Before proving these identities, we list the special numbers that occur in the
identities \eqref{4a.7}--\eqref{4a.11}.

\bigskip
\begin{center}
{\renewcommand{\arraystretch}{1.3}
\begin{tabular}{|r||r|r|r|r|r|}
\hline
$n$ & $B_{2n+1}(\tfrac{1}{3})$ & $B_{2n+2}$ & $E_n(\tfrac{1}{4})$ & $E_{2n}$ & $E_{2n+1}(\tfrac{1}{6})$\\
\hline
0 & $-\frac{1}{6}$ & $\frac{1}{6}$ & 1 & 1 & $-\frac{1}{3}$ \\
1 & $\frac{1}{27}$ & $-\frac{1}{30}$ & $-\frac{1}{4}$ & $-1$ & $\frac{23}{108}$ \\
2 & $-\frac{5}{243}$ & $\frac{1}{42}$ & $-\frac{3}{16}$ & 5 & $-\frac{1681}{3888}$\\
3 & $\frac{49}{2187}$ & $-\frac{1}{30}$ & $\frac{11}{64}$ & $-61$ & $\frac{257543}{139968}$\\
4 & $-\frac{809}{19683}$ & $\frac{5}{66}$ & $\frac{57}{256}$ & 1385 & $-\frac{67637281}{5038848}$\\
5 & $\frac{20317}{177147}$ & $-\frac{691}{2730}$ & $-\frac{361}{1024}$ & $-50521$ & $\frac{27138236663}{181398528}$\\
\hline
\end{tabular}}

\medskip {\bf Table~1}: $B_{2n+1}(\tfrac{1}{3}), B_{2n+2}, E_n(\tfrac{1}{4}), E_{2n}, E_{2n+1}(\tfrac{1}{6})$ for $0\leq n\leq 5$.  \end{center}

\medskip
\begin{proof}[Proof of Corollary~6]
(a) By \eqref{4a.2} and \eqref{2.7a}, $L_n^*(\chi)$ is the $n$th Taylor 
coefficient in the expansion of $\frac{1}{2}\sqrt{3}\csc(\frac{\pi}{3}(1-z))$
about $z=0$. For ease of notation we set $y:=\pi z/3$. Using the addition 
formula for the sine function, we obtain
\begin{align*}
\csc(\tfrac{\pi}{3}(1-z)) &= \frac{2}{\sqrt{3}\cos{y}-\sin{y}}
=2\cdot\frac{\sqrt{3}\cos{y}+\sin{y}}{3\cos^2y-\sin^2y}\\
&= \frac{2\sqrt{3}\cos{y}}{4\cos^2y-1}+\frac{2\sin{y}}{3-4\sin^2y} \\
&= \left(\frac{\sqrt{3}}{1+2\cos{y}}+\frac{\sqrt{3}}{4\cos^2y-1}\right)
+\left(\frac{3/2}{-4\sin^3y+3\sin{y}}-\frac{1/2}{\sin{y}}\right),
\end{align*}
where we have used a partial fraction decomposition in each of the two summands
in the second row. Next, using the identities 
\[
\cos(2y)=2\cos^2y-1,\qquad \sin(3y)=-4\sin^3y+3\sin{y}.
\]
we get
\[ \csc(\tfrac{\pi}{3}(1-z)) = 
\left(\frac{\sqrt{3}}{1+2\cos{y}}+\frac{\sqrt{3}}{1+2\cos(2y)}\right)
+\left(\frac{3/2}{\sin(3y)}-\frac{1/2}{\sin{y}}\right),
\]
and thus
\begin{align}
\frac{\sqrt{3}}{2}\csc(\tfrac{\pi}{3}(1-z)) 
&= \frac{3/2}{1+e^{\pi iz/3}+e^{-\pi iz/3}}
+\frac{3/2}{1+e^{2\pi iz/3}+e^{-2\pi iz/3}} \label{4a.12}\\
&\qquad+\frac{\sqrt{3}}{4}\left(3\csc(\pi z)-\csc(\tfrac{1}{3}\pi z)\right).\nonumber
\end{align}
Now we use the identity
\begin{equation}\label{4a.13}
\frac{3/2}{1+e^x+e^{-x}}
=-\sum_{n=0}^\infty\frac{3^{2n+1}}{2n+1}B_{2n+1}(\tfrac{1}{3})\frac{x^{2n}}{(2n)!},
\end{equation}
which is due to Glaisher \cite[p.~35]{Gl} (but note his different normalization
of Bernoulli polynomials), and the well-known expansion
\begin{equation}\label{4a.14}
\csc{x} = \sum_{n=0}^\infty(-1)^{n-1}\frac{2(2^{2n-1}-1)}{(2n)!}B_{2n}x^{2n-1};
\end{equation}
see, e.g., \cite[Eq.~4.19.4]{DLMF}. The identity \eqref{4a.13}, applied to the
first two terms on the right of \eqref{4a.12}, leads to \eqref{4a.7}, and 
\eqref{4a.14} applied to the final term of \eqref{4a.12} gives \eqref{4a.8}.

(b) By \eqref{4a.2} and \eqref{2.2}, $L_n^*(\chi)$ is the $n$th Taylor
coefficient in the expansion of $\frac{1}{2}\sqrt{2}\csc(\frac{\pi}{4}(1-z))$
about $z=0$. Using the addition formula for sine again, we see that
\begin{equation}\label{4a.15}
\frac{1}{2}\sqrt{2}\csc(\tfrac{\pi}{4}(1-z)) 
= \frac{1}{\cos(\pi z/4)-\sin(\pi z/4)}.
\end{equation}
We now use the known expansion
\begin{equation}\label{4a.16}
\frac{1}{\cos{x}-\sin{x}} = \sum_{n=0}^\infty Q_n(1)\frac{x^n}{n!},
\end{equation}
where the numbers $Q_n(1)$ are known as Springer numbers; see \cite{Ho1}. They
were also studied by Glaisher, and Hoffman \cite[Eq.~(12)]{Ho1} showed that
\begin{equation}\label{4a.17}
Q_n(1) = (-1)^{\lfloor(n+1)/2\rfloor}4^nE_n(\tfrac{1}{4})\qquad(n=0,1,2,\ldots).
\end{equation}
Combining \eqref{4a.15}--\eqref{4a.17}, we immediately get \eqref{4a.9}.

(c) The proof of \eqref{4a.10} and \eqref{4a.11} is similar in nature to that
of part (a). Here we begin with \eqref{4a.2} and \eqref{2.7b}; we leave the
details to the reader.
\end{proof}

It should be mentioned that all 5 identities in Corollary~6 were first found
experimentally by using MAPLE and the OEIS \cite{OEIS}.
We now prove the $L$-star analogue of the general Theorem~3.

\begin{theorem}
Let $\chi$ be a primitive nonprincipal Dirichlet character with conductor $q$.
Then for all integers $n\geq 1$ we have
\begin{equation}\label{4a.18}
L_n^*(\chi) 
= \frac{(-1)^n}{q^n}{\sum}^*\prod_{j\in\Phi}\frac{\chi(j)^{k_j}}{k_j!}
\sum_{k=1}^{k_j}B_{k_j,k}\left(\psi(\tfrac{j}{q}),\psi_1(\tfrac{j}{q}),\ldots,\psi_{k_j-k}(\tfrac{j}{q})\right),
\end{equation}
with the index set $\Phi:=\{j\mid 1\leq j\leq q-1, \gcd(j,q)=1\}$, and where
the summation $\sum^*$ is over all $k_j (j\in\Phi)$ that sum to $n$.
\end{theorem}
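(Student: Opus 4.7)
The plan is to mirror the argument for Theorem~3, with two substitutions: expand the reciprocal product $\prod_{k\geq 1}(1-\chi(k)z/k)^{-1}$ rather than $\prod_{k\geq 1}(1-\chi(k)z/k)$, and apply identity \eqref{4.11} for derivatives of $\Gamma(y)$ in place of \eqref{4.12} for derivatives of $1/\Gamma(y)$.

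First I would absorb the factor $(1-z)$ from \eqref{2.3} into the infinite product (using $\chi(1)=1$) to obtain
$$\prod_{k=1}^\infty\left(1-\chi(k)\frac{z}{k}\right) = \frac{(2\pi)^{\varphi(q)/2}}{\epsilon(q)}\prod_{j\in\Phi}\frac{1}{\Gamma\left(\frac{j-\chi(j)z}{q}\right)}.$$
Taking reciprocals and invoking Lemma~5 then gives
$$1+\sum_{n=1}^\infty L_n^*(\chi)z^n = \frac{\epsilon(q)}{(2\pi)^{\varphi(q)/2}}\prod_{j\in\Phi}\Gamma\left(\frac{j-\chi(j)z}{q}\right),$$
which reduces the proof to computing the Taylor expansion at $z=0$ of the finite product on the right.

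Next I would write $g_j(z):=\Gamma((j-\chi(j)z)/q)$, apply the Leibniz general product rule exactly as in \eqref{4.8}, and evaluate $g_j^{(k_j)}(0)$ via the chain rule together with \eqref{4.11}:
$$g_j^{(k_j)}(0) = \left(\frac{-\chi(j)}{q}\right)^{k_j}\Gamma\left(\frac{j}{q}\right)\sum_{k=1}^{k_j}B_{k_j,k}\left(\psi(\tfrac{j}{q}),\psi_1(\tfrac{j}{q}),\ldots,\psi_{k_j-k}(\tfrac{j}{q})\right).$$
Invoking Lemma~2 gives $\prod_{j\in\Phi}\Gamma(j/q)=(2\pi)^{\varphi(q)/2}/\epsilon(q)$, which exactly cancels the constant prefactor $\epsilon(q)/(2\pi)^{\varphi(q)/2}$. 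The condition $\sum^* k_j = n$ then converts $\prod_{j\in\Phi}(-\chi(j)/q)^{k_j}$ into $(-1)^n q^{-n}\prod_{j\in\Phi}\chi(j)^{k_j}$, producing the overall sign $(-1)^n$ appearing in \eqref{4a.18}. Equating coefficients of $z^n$ then yields the desired formula.

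The computation is essentially a sign-tracking mirror of the proof of Theorem~3, and there is no substantive obstacle beyond careful bookkeeping. The one point worth flagging is that the $(-1)^n$ prefactor in \eqref{4a.18} originates entirely from the chain-rule factor $(-\chi(j)/q)^{k_j}$ (cumulating to $(-1)^n$ by $\sum^*k_j=n$), while the absence of the alternating sign $(-1)^k$ inside the Bell-polynomial sum, in contrast to \eqref{4.13}, directly reflects the positive sign in \eqref{4.11} for $\Gamma$ as against the $(-1)^k$ appearing in \eqref{4.12} for $1/\Gamma$. No new analytic ingredient beyond what was used for Theorem~3 is required.
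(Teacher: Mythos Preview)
Your proposal is correct and follows essentially the same route as the paper's proof: invert \eqref{2.3} (with the $(1-z)$ absorbed) to obtain the generating function \eqref{4a.2} as a finite product of $g_j(z)=\Gamma((j-\chi(j)z)/q)$, then apply the Leibniz rule \eqref{4.8}, the derivative formula \eqref{4.11}, and Lemma~2 to extract the coefficient of $z^n$. Your sign-tracking remarks match the paper's computation exactly.
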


\begin{proof}
We proceed as in the proofs of Lemma~3 and Theorem~3. If we combine \eqref{4a.2}
with \eqref{2.3} and use \eqref{4.8} with $f_j(z)$ replaced by 
\begin{equation}\label{4a.19}
g_j(z) := \Gamma\left(\frac{j-\chi(j)z}{q}\right),\qquad
1\leq j\leq q-1,\; \gcd(j,q)=1,
\end{equation}
then we get the following analogue of \eqref{4.6}:
\begin{equation}\label{4a.20}
L_n^*(\chi) = \frac{\epsilon(q)}{(2\pi)^{\varphi(q)/2}}
{\sum}^*\prod_{j\in\Phi}\frac{1}{k_j!}g_j^{(k_j)}(0),
\end{equation}
where the sum and the product are over the same ranges as before. By
\eqref{4a.19} and \eqref{4.11} we have
\begin{align}
g_j^{(k_j)}(0) &= \left(\frac{-\chi(j)}{q}\right)^{k_j}
\left.\frac{d^{k_j}}{dy^{k_j}}\Gamma(y)\right|_{y=j/q}\label{4a.21}\\
&= \left(\frac{-\chi(j)}{q}\right)^{k_j}\Gamma(\tfrac{j}{k})
\sum_{k=1}^{k_j}B_{k_j,k}\left(\psi(\tfrac{j}{q}),\psi_1(\tfrac{j}{q}),\ldots,\psi_{k_j-k}(\tfrac{j}{q})\right).\nonumber
\end{align}
As before, since $\sum^*k_j=n$, and using \eqref{3.5}, we have
\[
\prod_{j\in\Phi}\left(\frac{-1}{q}\right)^{k_j}\Gamma(\tfrac{j}{k})
=\left(\frac{-1}{q}\right)^n\frac{(2\pi)^{\varphi(q)/2}}{\epsilon(q)}.
\]
Combining this with \eqref{4a.21} and \eqref{4a.20}, we finally get
\eqref{4a.18}.
\end{proof}

We illustrate Theorem~5 with the following example, which is analogous to
Example~11.

\medskip
\noindent
{\bf Example~13.}  Let $n=2$ in \eqref{4a.18}. Then, just as in Example~11,
$\sum^*$ consists of the two types of sums
\[
\sum_{j\in\Phi}\frac{\chi(j)^2}{2}
\left(B_{2,1}(\psi(\tfrac{j}{q}),\psi_1(\tfrac{j}{q}))
+B_{2,2}(\psi(\tfrac{j}{q}))\right)
=\sum_{j\in\Phi}\frac{\chi(j)^2}{2}
\left(\psi_1(\tfrac{j}{q})+\psi(\tfrac{j}{q})^2\right)
\]
and
\[
\sum_{1\leq j<\ell\leq q-1}\frac{\chi(j)}{1}B_{1,1}(\psi(\tfrac{j}{q}))
\frac{\chi(\ell)}{1})B_{1,1}(\psi(\tfrac{\ell}{q}))\\
=\sum_{1\leq j<\ell\leq q-1}\chi(j\ell)\psi(\tfrac{j}{q})\psi(\tfrac{\ell}{q}),
\]
where, as before, we have used Example~9. The two sums, together with 
\eqref{4a.18}, give
\begin{equation}\label{4a.22}
L_2^*(\chi)
= \frac{1}{2q^2}\left[\left(\sum_{j=1}^{q-1}\chi(j)\psi(\tfrac{j}{q})\right)^2
+\sum_{j=1}^{q-1}\chi(j)^2\psi_1(\tfrac{j}{q})\right].
\end{equation}

\medskip
Note the difference in sign from \eqref{4.14}. The connection between these
two identities also follows from Corollary~5 with $n=2$, which gives
\[
L_2^*(\chi)+L_2(\chi) = L_1(\chi)L_1^*(\chi) = L_1(\chi)^2;
\]
then compare this with \eqref{4.2}. The identity \eqref{4a.22} 
immediately leads to analogues of \eqref{4.15} and \eqref{4.16}. Finally, if we
subtract \eqref{4.15}, resp.\ \eqref{4.16}, from these analogues, we get the 
following evaluation in both cases.

\begin{corollary}
Let $\chi$ be a primitive nonprincipal character with conductor $q>1$. Then
\begin{equation}\label{4a.23}
\sum_{k=1}^\infty\frac{\chi(k)^2}{k^2}
= \frac{\pi^2}{q^2}\sum_{j=1}^{\lfloor\frac{q-1}{2}\rfloor}
\left(\frac{\chi(j)}{\sin(\tfrac{\pi j}{q})}\right)^2.
\end{equation}
In particular,
\begin{equation}\label{4a.24}
\sum_{\substack{k=1\\(k,q)=1}}^\infty\frac{1}{k^2}
=\frac{\pi^2}{q^2}\sum_{{\substack{j=1\\(j,q)=1}}}^{\lfloor\frac{q-1}{2}\rfloor}
\csc^2(\tfrac{\pi j}{q}).
\end{equation}
\end{corollary}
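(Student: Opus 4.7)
The plan is to derive \eqref{4a.23} by exploiting the sign discrepancy between the formulas \eqref{4.14} and \eqref{4a.22} for $L_2(\chi)$ and $L_2^*(\chi)$. The key observation is that, directly from \eqref{4.4} and \eqref{4a.1}, the star sum differs from the regular one only by the inclusion of the diagonal terms $k_1 = k_2$, so
\[
L_2^*(\chi) - L_2(\chi) = \sum_{k=1}^\infty \frac{\chi(k)^2}{k^2}.
\]
Subtracting \eqref{4.14} from \eqref{4a.22} causes the squared $\psi$-sum to cancel while the $\psi_1$ terms combine, yielding
\[
\sum_{k=1}^\infty \frac{\chi(k)^2}{k^2}
= \frac{1}{q^2}\sum_{j=1}^{q-1}\chi(j)^2\,\psi_1\bigl(\tfrac{j}{q}\bigr).
\]

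The next step is to rewrite this as a $\csc^2$ sum. I would pair each $j$ with $q-j$; since $\chi(-1) = \pm 1$, we have $\chi(q-j)^2 = \chi(j)^2$, so each pair contributes $\chi(j)^2 \bigl[\psi_1(j/q) + \psi_1(1 - j/q)\bigr]$. Applying the trigamma reflection formula
\[
\psi_1(z) + \psi_1(1-z) = \frac{\pi^2}{\sin^2(\pi z)},
\]
quoted in the paper just before Corollary~4, collapses each paired contribution to $\chi(j)^2 \pi^2/\sin^2(\pi j/q)$. For odd $q$ this exhausts the range $1 \le j \le (q-1)/2$; for even $q > 2$ the unpaired middle index $j = q/2$ has $\gcd(q/2, q) > 1$, so $\chi(q/2) = 0$ and that term is killed automatically. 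This yields \eqref{4a.23}.

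For the specialization \eqref{4a.24}, I would take $\chi$ to be a primitive real (quadratic) character with conductor $q$, so that $\chi(k)^2$ equals $1$ exactly when $(k,q) = 1$. Both sides of \eqref{4a.23} then restrict to the coprime-indexed form of \eqref{4a.24}. There is no real obstacle: the cancellation of $\psi$-squared terms in the subtraction, together with the clean parity handling of the middle term when $q$ is even, make the argument essentially mechanical once \eqref{4.14} and \eqref{4a.22} are in hand.
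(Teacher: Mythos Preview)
Your argument is correct and is essentially the paper's own proof: the paper also obtains \eqref{4a.23} by subtracting the $L_2$ evaluation from the $L_2^*$ evaluation, using the sign discrepancy in the $\psi_1$ term together with the trigamma reflection formula, and then specializes to a real character for \eqref{4a.24}. The only cosmetic difference is ordering---the paper first rewrites \eqref{4a.22} into a sine-form analogue of \eqref{4.15} and then subtracts, whereas you subtract \eqref{4.14} from \eqref{4a.22} first and then apply the reflection formula---but the content is identical.
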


The identity \eqref{4a.24} follows from \eqref{4a.23} by letting $\chi$ be the
unique quadratic character modulo $q$.

\medskip
\noindent
{\bf Example~14.}  Let $q$ be an odd prime in \eqref{4a.24}. Then the left-hand
side becomes $\zeta(2)-\zeta(2)/q^2$, and using Euler's evaluation 
$\zeta(2)=\pi^2/6$, we get
\[
\sum_{j=1}^{\frac{q-1}{2}}\csc^2(\tfrac{\pi j}{q}) = \frac{q^2-1}{6}.
\]
This is a special case of a known identity; see, e.g., \cite[p.~644]{PrE} or
\cite[(24.1.3)]{Ha}.

\section{Products involving cyclotomic polynomials}

The first identity in \eqref{1.2} is a special case of 
\begin{equation}\label{5.1}
\prod_{k=1}^\infty\left(1-(\tfrac{z}{k})^m\right)
=\frac{-1}{z^m}\prod_{j=1}^m\Gamma(-ze^{2\pi ij/m})^{-1}\qquad(m\geq 2),
\end{equation}
which can be found, with a sketch of a proof, as identity 1.3(7) in 
\cite[p.~7]{EMOT}. It is the purpose of this section to derive identities 
similar to \eqref{5.1}, but involving products of cyclotomic polynomials on
the left. We begin with a general result.

\begin{lemma}
Suppose that the complex-valued function $f$ is such that $f(1)+\dots+f(m)=0$,
and set $P_f:=f(1)\dots f(m)$. If $P_f\neq 0$, then
\begin{equation}\label{5.2}
\prod_{k=1}^\infty\prod_{j=1}^m\left(1-f(j)\tfrac{z}{k}\right)
=\frac{(-1)^m}{z^mP_f}\prod_{j=1}^m\frac{1}{\Gamma(-zf(j))}
= \prod_{j=1}^m\frac{1}{\Gamma(1-zf(j))}.
\end{equation}
\end{lemma}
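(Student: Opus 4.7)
The plan is to derive \eqref{5.2} as an almost immediate consequence of Lemma~1, followed by a one-step manipulation via the functional equation of the gamma function. The left-hand side of \eqref{5.2} starts at $k=1$, so first I would shift the index to rewrite it as
\[
\prod_{k=1}^\infty\prod_{j=1}^m\left(1-f(j)\tfrac{z}{k}\right)
=\prod_{k=0}^\infty\prod_{j=1}^m\left(1-f(j)\tfrac{z}{1+k}\right),
\]
which is precisely the left-hand side of \eqref{3.2} with $n:=m$, $a:=z$, and $z_j:=1$ for every $j$. The hypothesis $f(1)+\dots+f(m)=0$ matches the hypothesis of Lemma~1, and the choice $z_j=1\neq 0$ is admissible. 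Applying Lemma~1 therefore yields directly
\[
\prod_{k=1}^\infty\prod_{j=1}^m\left(1-f(j)\tfrac{z}{k}\right)
=\prod_{j=1}^m\frac{\Gamma(1)}{\Gamma(1-zf(j))}
=\prod_{j=1}^m\frac{1}{\Gamma(1-zf(j))},
\]
which is the rightmost expression in \eqref{5.2}.

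To obtain the middle expression, I would apply the functional equation $\Gamma(w+1)=w\,\Gamma(w)$ with $w=-zf(j)$, giving $\Gamma(1-zf(j))=(-zf(j))\,\Gamma(-zf(j))$. Multiplying these relations over $j=1,\dots,m$ and using the notation $P_f=f(1)\dots f(m)$, I get
\[
\prod_{j=1}^m\Gamma(1-zf(j))=(-z)^mP_f\prod_{j=1}^m\Gamma(-zf(j)),
\]
and since $P_f\neq 0$ by hypothesis, taking reciprocals and using $(-z)^{-m}=(-1)^m/z^m$ produces the middle expression in \eqref{5.2}.

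There is essentially no difficulty in this argument; the only minor point to mention is that the intermediate form involving $\Gamma(-zf(j))$ requires $-zf(j)$ to avoid the nonpositive integers, whereas the endpoints of the chain of equalities, written in terms of $1/\Gamma$, are entire in $z$. Since both outer expressions are meromorphic (in fact entire when expanded out), and they agree on the dense open set where $\Gamma(-zf(j))$ is finite, the identity extends to all $z\neq 0$ by the identity theorem. So the entire proof reduces to a specialization of Lemma~1 and one application of $\Gamma(w+1)=w\Gamma(w)$.
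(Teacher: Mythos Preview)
Your proof is correct. It differs from the paper's in one respect worth noting: the paper does not invoke Lemma~1 but instead returns to the Weierstrass product for $1/\Gamma$ in \eqref{1.3} and redoes the exponential-cancellation argument directly, arriving first at the middle expression of \eqref{5.2} and then passing to the rightmost one via $x\Gamma(x)=\Gamma(x+1)$. You instead recognise that the whole identity is a specialisation of Lemma~1 with $z_j\equiv 1$, which immediately yields the rightmost expression, and then you recover the middle form by the same functional equation in the other direction. Your route is more economical, since it avoids repeating the computation already absorbed into Lemma~1; the paper's route is more self-contained but essentially duplicates that earlier work. Either way the content is the same, and your handling of the potential poles of $\Gamma(-zf(j))$ via the identity theorem is a nice touch that the paper leaves implicit.
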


\begin{proof}
Using the second identity in \eqref{1.3}, we get
\begin{align*}
\prod_{j=1}^m\frac{1}{\Gamma(-zf(j))}
&=\prod_{j=1}^m\left\{-zf(j)e^{\gamma(-zf(j))}\prod_{k=1}^\infty
\left[\left(1-\frac{zf(j)}{k}\right)e^{zf(j)/k}\right]\right\} \\
&=(-z)^mP_f\exp(-z\gamma\textstyle{\sum_{j=1}^m f(j)})\\
&\quad\cdot\prod_{k=1}^\infty\left[\prod_{j=1}^m\left(1-\frac{zf(j)}{k}\right)
\exp(\tfrac{z}{k}\textstyle{\sum_{j=1}^m f(j)})\right].
\end{align*}
Now the condition $f(1)+\dots+f(m)=0$ immediately gives the first identity of
\eqref{5.2}. The second identity follows from the basic equation
$x\Gamma(x)=\Gamma(x+1)$.
\end{proof}

\medskip
\noindent
{\bf Example~15.} Let $f(j)=e^{2\pi ij/m}$. Then for any positive integer $k$ 
we have the obvious factorization
\[
\prod_{j=1}^m\left(f(j)\tfrac{z}{k}-1\right) = (\tfrac{z}{k})^m-1,
\]
which illustrates the well-known fact that the sum and the product of 
$f(1),\ldots,f(m)$ are 0 and $-1$, respectively. The identity \eqref{5.1} now
follows immediately from \eqref{5.2}.

\medskip
If instead of taking all the $m$th roots of unity, as done in Example~15, we 
consider only the primitive $m$th roots of unity, we obtain a result concerning
cyclotomic polynomials. In what follows, let $\Phi_m(x)$ be the $m$th 
cyclotomic polynomial.

\begin{theorem}
If $m\geq 3$ is an integer that contains a square, then
\begin{equation}\label{5.3}
\prod_{k=1}^\infty\Phi_m(\tfrac{z}{k})
=\frac{1}{z^{\varphi(m)}}\prod_{\substack{j=1\\(j,m)=1}}^{m-1}
\frac{1}{\Gamma(-ze^{2\pi ij/m})}
= \prod_{\substack{j=1\\(j,m)=1}}^{m-1}\frac{1}{\Gamma(1-ze^{2\pi ij/m})}.
\end{equation}
\end{theorem}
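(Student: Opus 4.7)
The strategy is to apply Lemma~5 directly, taking $f$ to run through the primitive $m$th roots of unity, using the factorization
\[
\Phi_m(x) = \prod_{\substack{j=1\\(j,m)=1}}^{m-1}(x-\zeta_m^j),\qquad \zeta_m:=e^{2\pi i/m}.
\]

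First I would reindex the $\varphi(m)$ reduced residues modulo $m$ as $j_1,\ldots,j_{\varphi(m)}$ and set $f(\lambda):=\zeta_m^{j_\lambda}$, so that Lemma~5 is being applied with $\varphi(m)$ playing the role of $m$ in that lemma. The two hypotheses of Lemma~5 must then be verified. The sum condition reads
\[
\sum_{\lambda=1}^{\varphi(m)} f(\lambda) = \sum_{\substack{j=1\\(j,m)=1}}^{m-1}\zeta_m^j = \mu(m),
\]
the Möbius function at $m$. This is precisely zero when $m$ is not squarefree, i.e.\ when $m$ contains a square, which is the standing assumption of the theorem and explains its hypothesis. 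The product condition is immediate: $P_f=\prod_\lambda \zeta_m^{j_\lambda}=\zeta_m^{m\varphi(m)/2}=(-1)^{\varphi(m)}=1$ for $m\geq 3$, since $\varphi(m)$ is then even; in particular $P_f\neq 0$.

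Next I would massage the left-hand side of \eqref{5.3} into the form required by Lemma~5. Using $z/k-\zeta_m^j=-\zeta_m^j\bigl(1-\zeta_m^{-j}z/k\bigr)$ and substituting $j\mapsto m-j$ (which permutes the reduced residues mod $m$), and invoking $\prod_\lambda \zeta_m^{j_\lambda}=1$ and $(-1)^{\varphi(m)}=1$, one finds
\[
\Phi_m(z/k)=\prod_{\substack{j=1\\(j,m)=1}}^{m-1}\bigl(1-\zeta_m^j\, z/k\bigr).
\]
Applying Lemma~5 to this yields the second equality of \eqref{5.3}:
\[
\prod_{k=1}^\infty \Phi_m(z/k)=\prod_{\substack{j=1\\(j,m)=1}}^{m-1}\frac{1}{\Gamma(1-z\zeta_m^j)}.
\]

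Finally, the first equality in \eqref{5.3} follows from the functional equation $\Gamma(1-w)=-w\,\Gamma(-w)$ applied in each factor, using once more that $\prod_\lambda \zeta_m^{j_\lambda}=1$ and $(-1)^{\varphi(m)}=1$ to simplify the resulting constant to $1/z^{\varphi(m)}$. The only conceptual obstacle in the argument is recognizing that the sum of the primitive $m$th roots of unity is $\mu(m)$, which dictates the squarefullness hypothesis on $m$; everything else is bookkeeping with roots of unity and a direct appeal to Lemma~5.
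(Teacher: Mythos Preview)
Your proof is correct and follows essentially the same route as the paper: apply the lemma yielding \eqref{5.2} with $f$ running over the primitive $m$th roots of unity, using that their sum is $\mu(m)=0$ precisely when $m$ is not squarefree and that their product is $1$ for $m\geq 3$. The only slip is bibliographic: in the paper's numbering the lemma you invoke is Lemma~6, not Lemma~5.
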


\begin{proof}
We let $f(j)=e^{2\pi ij/m}$ when $\gcd(j,m)=1$, and take the product in
\eqref{5.2} over only those $j$. Then by definition of the cyclotomic polynomial
we have
\[
\prod_{\substack{j=1\\(j,m)=1}}^m\left(1-f(j)\tfrac{z}{k}\right)
= (-1)^{\varphi(m)}\Phi_m(\tfrac{z}{k}).
\]
We recall the well-known identity
\begin{equation}\label{5.4}
\Phi_m(x) = \prod_{d\mid m}\left(x^d-1\right)^{\mu(m/d)},
\end{equation}
where $\mu(n)$ is the M\"obius function. From the properties of $\mu(n)$ and
\eqref{5.4} we know that the product of all primitive $m$th roots of unity is 1,
i.e., $P_f=1$, and their sum is $\mu(m)$. Hence the summation condition is
satisfied when $\mu(m)=0$, i.e., when $m$ is not square-free. Now \eqref{5.3}
follows directly from \eqref{5.2}.
\end{proof}

Using the reflection formula \eqref{3.7} for the gamma function, we can rewrite
the identity \eqref{5.3} for even integers $m$. We first require the following
definition. For integers $m\geq 2$ we set
\begin{equation}\label{5.5}
S_2(m):= \sum_{\substack{j=1\\(j,m)=1}}^{\lfloor m/2\rfloor}j.
\end{equation}
We will deal with the evaluation of this sum later in this section and in
Section~6.

\begin{corollary} 
If $m\geq 4$ is an even integer that contains a square, then
\begin{equation}\label{5.6}
\prod_{k=1}^\infty\Phi_m(\tfrac{z}{k})
=\frac{e^{2\pi iS_2(m)/m}}{(-\pi z)^{\varphi(m)/2}}
\prod_{\substack{j=1\\(j,m)=1}}^{\tfrac{m}{2}-1}\sin(\pi ze^{2\pi ij/m}).
\end{equation}
\end{corollary}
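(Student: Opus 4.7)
The plan is to apply the reflection formula (3.7) to the identity in Theorem~6, pairing the indices in $\Phi=\{j:1\le j\le m-1,\,(j,m)=1\}$ via $j\leftrightarrow j+m/2$.

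Starting from the first form of Theorem~6,
$$\prod_{k=1}^\infty\Phi_m(z/k) = \frac{1}{z^{\varphi(m)}}\prod_{j\in\Phi}\frac{1}{\Gamma(-ze^{2\pi ij/m})},$$
I would set $\Phi^+=\{j\in\Phi:1\le j<m/2\}$, which has $\varphi(m)/2$ elements. Since $m$ is even, every $j\in\Phi$ is odd, and under the hypothesis on $m$ one verifies that $j+m/2\in\Phi\setminus\Phi^+$ for each $j\in\Phi^+$, yielding a bijection. Because $e^{2\pi i(j+m/2)/m}=-e^{2\pi ij/m}$, the factor at the paired index $j+m/2$ is $\Gamma(ze^{2\pi ij/m})$, so the full $\Phi$-product regroups as
$$\prod_{j\in\Phi^+}\Gamma(-ze^{2\pi ij/m})\,\Gamma(ze^{2\pi ij/m}).$$

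To each pair I would apply the immediate consequence of (3.7),
$$\Gamma(w)\,\Gamma(-w) = \frac{-\pi}{w\sin(\pi w)},$$
which follows from (3.7) together with $\Gamma(1-w)=-w\Gamma(-w)$, taking $w=ze^{2\pi ij/m}$. Inverting, multiplying over $j\in\Phi^+$, and using the identity $\prod_{j\in\Phi^+}e^{2\pi ij/m}=e^{2\pi iS_2(m)/m}$ from the definition \eqref{5.5}, I obtain
$$\prod_{j\in\Phi}\frac{1}{\Gamma(-ze^{2\pi ij/m})} = \frac{(-z)^{\varphi(m)/2}\,e^{2\pi iS_2(m)/m}}{\pi^{\varphi(m)/2}}\prod_{\substack{j=1\\(j,m)=1}}^{m/2-1}\sin(\pi ze^{2\pi ij/m}).$$
Substituting this back into the first form of Theorem~6 and absorbing the outer factor $z^{-\varphi(m)}$ together with the sign $(-1)^{\varphi(m)/2}$ into the denominator $(-\pi z)^{\varphi(m)/2}$ produces exactly \eqref{5.6}.

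The main obstacle is the arithmetic step verifying that $j\mapsto j+m/2$ really sends $\Phi^+$ bijectively onto $\Phi\setminus\Phi^+$; this is what lets two gamma factors collapse into a single sine per pair via reflection. Once that pairing is in hand, the rest is routine bookkeeping of the constant and phase factors.
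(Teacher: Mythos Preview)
Your approach is exactly the paper's: pair the gamma factor at $j$ with the one at $j+m/2$, apply the reflection formula in the form $\Gamma(w)\Gamma(-w)=-\pi/(w\sin\pi w)$, and collect the resulting phase as $e^{2\pi iS_2(m)/m}$.

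You are right to single out the bijection $j\mapsto j+m/2$ as the crux, but your assertion that ``under the hypothesis on $m$ one verifies that $j+m/2\in\Phi\setminus\Phi^+$'' is not correct as stated. It holds only when $4\mid m$. If $m\equiv 2\pmod 4$ then $m/2$ is odd, every $j\in\Phi$ is odd, and hence $j+m/2$ is even, so $\gcd(j+m/2,m)\geq 2$ and $j+m/2\notin\Phi$. Concretely, for $m=18$ (even and not squarefree, so covered by the hypothesis) and $j=1$ one gets $j+m/2=10$, which is not coprime to $18$; in fact the primitive $18$th roots of unity do not come in $\pm$-pairs at all, so no reflection-formula pairing is available. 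A quick check at $z\to 0$ confirms that the right-hand side of \eqref{5.6} tends to $(-1)^{\varphi(m)/2}e^{4\pi iS_2(m)/m}$, which for $m=18$ (with $\varphi(18)=6$, $S_2(18)=13$) equals $-e^{8\pi i/9}\neq 1=\Phi_{18}(0)$.

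So the gap you flagged is real and cannot be filled in the generality claimed. The paper's own proof glosses over exactly this point; both the argument and the identity \eqref{5.6} are valid only under the stronger hypothesis $4\mid m$, which is precisely the case used downstream in Corollary~9.
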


Before proving this identity, we consider two examples.

\medskip
\noindent
{\bf Example~16.} Let $m=4$. Then $\Phi_4(\frac{z}{k})=1+(\frac{z}{k})^2$,
$S_2(4)=1$, $\varphi(4)=2$, and $e^{2\pi i/4}=1$. Hence we get from \eqref{5.6},
\begin{equation}\label{5.7}
\prod_{k=1}^\infty\left(1+(\tfrac{z}{k})^2\right)
=\frac{i}{-\pi z}\sin(\pi iz) = \frac{\sinh(\pi z)}{\pi z}.
\end{equation}
This is a well-known identity; see, e.g., \cite[(89.5.16)]{Ha}. With $z=1$, 
this gives
\begin{equation}\label{5.8}
\prod_{k=1}^\infty\left(1+\frac{1}{k^2}\right)
=\frac{1}{2\pi}\left(e^{\pi}-e^{-\pi}\right).
\end{equation}
It is interesting to note that Theorem~6 gives the evaluation 
$(\Gamma(i)\Gamma(-i))^{-1}$ for \eqref{5.8}.
Finally, if we divide both sides of \eqref{5.7} by $1+z^2$ and take the
limit as $z\rightarrow i$, then we get the first identity in \eqref{1.2}.

\medskip
\noindent
{\bf Example~17.} Let $m=12$. Then $\Phi_{12}(x)=1-x^2+x^4$, $S_2(12)=6$, 
$\varphi(12)=4$, and $e^{2\pi i6/12}=-1$. Hence 
\begin{equation}\label{5.9}
\prod_{k=1}^\infty\left(1-(\tfrac{z}{k})^2+(\tfrac{z}{k})^4\right)
=\frac{-1}{\pi^2z^2}\sin(\pi ze^{\pi i/6})\sin(\pi ze^{5\pi i/6}).
\end{equation}
Using the identities 
\[
e^{\pi i/6}=\frac{\sqrt{3}+i}{2},\quad e^{5\pi i/6}=\frac{-\sqrt{3}+i}{2},\quad
\cos(i\theta)=\cosh(\theta),\quad \sin(i\theta)=i\sinh(\theta),
\]
some easy manipulations of the right-hand side of \eqref{5.9} give
\begin{equation}\label{5.10}
\prod_{k=1}^\infty\left(1-(\tfrac{z}{k})^2+(\tfrac{z}{k})^4\right)
=\frac{\sin^2(\frac{1}{2}\sqrt{3}\pi z)+\sinh^2(\frac{1}{2}\pi z)}{\pi^2z^2}.
\end{equation}
If we choose the specific value $z=1/(2\sqrt{3})$, then \eqref{5.10} shows 
after another easy manipulation that
\[
\prod_{k=1}^\infty\left(1-\frac{1}{12k^2}+\frac{1}{144k^4}\right)
=\frac{6}{\pi^2}\cdot\cosh\left(\frac{\pi}{2\sqrt{3}}\right).
\]

\begin{proof}[Proof of Corollary~8]
If we set $z=-y$ in \eqref{3.7} and use the basic identity 
$\Gamma(y+1)=y\Gamma(y)$, then we get the reflection formula
\begin{equation}\label{5.11}
\Gamma(y)\Gamma(-y) = \frac{-\pi}{y\sin(\pi y)}.
\end{equation}
Next we note that for even $m\geq 4$ we have 
\[
-e^{2\pi ij/m} = e^{2\pi i(j/m+1/2)} = e^{2\pi i(\frac{j+m/2}{2})}.
\]
On the right-hand side of \eqref{5.3} we can therefore group the terms belonging
to $j$ and $j+m/2$ together and use \eqref{5.11} with $y=-ze^{2\pi ij/m}$. Then
we get
\begin{equation}\label{5.12}
\prod_{k=1}^\infty\Phi_m(\tfrac{z}{k})
=\frac{1}{z^{\varphi(m)}}\cdot\frac{(-z)^{\varphi(m)/2}}{(-\pi)^{\varphi(m)/2}}
\prod_{\substack{j=1\\(j,m)=1}}^{\tfrac{m}{2}-1}e^{2\pi ij/m}
\prod_{\substack{j=1\\(j,m)=1}}^{\tfrac{m}{2}-1}\sin(-\pi ze^{2\pi ij/m}).
\end{equation}
But this is the same as \eqref{5.6} if we note that by \eqref{5.5} the first 
product on the right of \eqref{5.12} is $e^{2\pi iS_2(m)/m}$.
\end{proof}

While the sum $S_2(m)$, defined in \eqref{5.5}, can be evaluated for all $m$
(see the final section), only when $m$ is divisible by 4 do we get values that
allow for a simplification of \eqref{5.6}. We begin with a lemma.

\begin{lemma}
Let $m\geq 4$ be an integer divisible by 4. Then
\begin{equation}\label{5.13}
\frac{4\,S_2(m)}{m}\equiv\begin{cases}
1\pmod{4} &\hbox{if}\;m=4;\\
2\pmod{4} &\hbox{if}\;m=8\;\hbox{or}\;m=4p^\alpha, p\equiv 3\pmod{4}, \alpha\geq 1;\\
0\pmod{4} &\hbox{otherwise}.
\end{cases}
\end{equation}
\end{lemma}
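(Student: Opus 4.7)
The strategy is to first obtain an exact closed-form formula for $S_2(m)$ when $4\mid m$, and then to analyze the resulting expression modulo $4$.

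To evaluate $S_2(m)$, I would replace the coprimality condition $[\gcd(j,m)=1]$ by $\sum_{d\mid\gcd(j,m)}\mu(d)$ and swap the order of summation, obtaining
\[
S_2(m)=\sum_{d\mid m}\mu(d)\,d\sum_{k=1}^{\lfloor m/(2d)\rfloor}k.
\]
The decisive observation is that when $4\mid m$, every \emph{squarefree} divisor $d$ of $m$ automatically satisfies $2d\mid m$: indeed, writing $m=2^a m'$ with $a\ge 2$ and $m'$ odd, and $d=2^\varepsilon d_0$ with $\varepsilon\in\{0,1\}$ and $d_0\mid m'$, one has $\varepsilon+1\le a$, hence $2d=2^{\varepsilon+1}d_0\mid 2^a m'=m$. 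Consequently the floor may be dropped whenever $\mu(d)\ne 0$, the inner sum equals $\tfrac12\bigl(m/(2d)\bigr)\bigl(m/(2d)+1\bigr)$, and expanding and invoking the standard identities $\sum_{d\mid m}\mu(d)/d=\varphi(m)/m$ and $\sum_{d\mid m}\mu(d)=0$ (valid since $m>1$) yields
\[
S_2(m)=\frac{m^2}{8}\cdot\frac{\varphi(m)}{m}+\frac{m}{4}\cdot 0=\frac{m\varphi(m)}{8}.
\]
Therefore $4S_2(m)/m=\varphi(m)/2$, and the lemma reduces to determining $\varphi(m)/2\pmod 4$.

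For the second step I would write $m=2^a\prod_i p_i^{\alpha_i}$ with $a\ge 2$ and distinct odd primes $p_i$, so that
\[
v_2(\varphi(m))=(a-1)+\sum_i v_2(p_i-1).
\]
The three congruence classes in \eqref{5.13} correspond precisely to $v_2(\varphi(m))=1$, $v_2(\varphi(m))=2$, and $v_2(\varphi(m))\ge 3$, respectively. A short case analysis identifies each: $v_2(\varphi(m))=1$ forces no odd prime factors and $a=2$, i.e.\ $m=4$; $v_2(\varphi(m))=2$ forces either no odd primes with $a=3$ (giving $m=8$), or exactly one odd prime with $a=2$ and $v_2(p-1)=1$, i.e.\ $p\equiv 3\pmod 4$ (giving $m=4p^\alpha$); all remaining $m$ fall under $v_2(\varphi(m))\ge 3$, i.e.\ $\varphi(m)/2\equiv 0\pmod 4$.

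I do not foresee a serious obstacle; the main delicate point is the justification that the floor in the M\"obius sum may be dropped, which uses the hypothesis $4\mid m$ in an essential way and would fail, for example, for $m\equiv 2\pmod 4$ (where $d=m/2$ is squarefree but $2d\nmid m$). This is presumably the reason the lemma, and correspondingly the simplification of \eqref{5.6}, is restricted to multiples of $4$.
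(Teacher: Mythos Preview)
Your proposal is correct and follows the same two-step structure as the paper: first establish $S_2(m)=m\varphi(m)/8$ for $4\mid m$, hence $4S_2(m)/m=\varphi(m)/2$, and then classify $\varphi(m)/2\pmod 4$ by the prime factorization of $m$. The one substantive difference is that the paper simply \emph{cites} the identity $S_2(m)=m\varphi(m)/8$ (from \cite{Zm} and OEIS A066840), whereas you supply a self-contained M\"obius-inversion proof of it; your observation that every squarefree divisor $d$ of $m$ satisfies $2d\mid m$ when $4\mid m$ is exactly the mechanism that makes the floors disappear, and your remark that this fails for $m\equiv 2\pmod 4$ correctly pinpoints why the formula (and the lemma) require $4\mid m$. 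Your case analysis via the $2$-adic valuation $v_2(\varphi(m))=(a-1)+\sum_i v_2(p_i-1)$ is a clean reformulation of the paper's more ad hoc case split; one small caveat is that $v_2(\varphi(m))=1$ a priori only says $\varphi(m)/2$ is odd, not $\equiv 1\pmod 4$, but since you then show this forces $m=4$ (where $\varphi(m)/2=1$), the conclusion is unaffected.
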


\begin{proof}
For $m$ divisible by 4, it is known that $S_2(m)=m\varphi(m)/8$;
see \cite[p.~65]{Zm} or \cite[A066840]{OEIS}. Hence
\begin{equation}\label{5.14}
\frac{4\,S_2(m)}{m} = \frac{1}{2}\varphi(m).
\end{equation}
When $m=4$, then the right-hand side of \eqref{5.14} is 1, and for $m=8$ it is
2. For all higher powers of 2, \eqref{5.14} is $2^\nu$ for $\nu\geq 2$.

When $m=4p^\alpha$, with $p$ an odd prime and $\alpha\geq 1$, then
\[
\frac{1}{2}\varphi(m) = (p-1)p^{\alpha-1} \equiv\begin{cases}
0\pmod{4} &\hbox{if}\;\;p\equiv 1\pmod{4},\\
2\pmod{4} &\hbox{if}\;\;p\equiv 3\pmod{4}.
\end{cases}
\]
Finally, when $m=4m'$ and $m'$ has at least two distinct prime factors, then
each of them contributes at least one factor of 2. Hence 
$\frac{1}{2}\varphi(m)\equiv 0\pmod{4}$ in this case, which completes the proof.
\end{proof}

We are now ready to state a simplified special case of Corollary~8.

\begin{corollary}
If $m$ is a positive integer divisible by 4, then
\begin{equation}\label{5.15}
\prod_{k=1}^\infty\Phi_m(\tfrac{z}{k})
=\frac{\delta(m)}{(-\pi z)^{\varphi(m)/2}}
\prod_{\substack{j=1\\(j,m)=1}}^{\tfrac{m}{2}-1}\sin(\pi ze^{2\pi ij/m}),
\end{equation}
where
\[
\delta(m) = \begin{cases}
i &\hbox{if}\;m=4;\\
-1 &\hbox{if}\;m=8\;\hbox{or}\;m=4p^\alpha,\; p\equiv 3\pmod{4},\; \alpha\geq 1;\\
1 &\hbox{otherwise}.
\end{cases}
\]
\end{corollary}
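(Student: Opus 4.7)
The plan is to derive Corollary~9 directly from Corollary~8 by evaluating the exponential prefactor $e^{2\pi i S_2(m)/m}$ via Lemma~7. First, I observe that any integer $m$ divisible by $4$ is automatically divisible by the square $4$, so Corollary~8 applies and yields
\[
\prod_{k=1}^\infty\Phi_m(\tfrac{z}{k})
=\frac{e^{2\pi iS_2(m)/m}}{(-\pi z)^{\varphi(m)/2}}
\prod_{\substack{j=1\\(j,m)=1}}^{\tfrac{m}{2}-1}\sin(\pi ze^{2\pi ij/m}).
\]
Comparing with the stated identity \eqref{5.15}, it therefore suffices to show that $e^{2\pi iS_2(m)/m}=\delta(m)$ in each of the three cases listed.

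The key observation is that the exponent can be rewritten as
\[
e^{2\pi iS_2(m)/m}=\exp\!\left(\frac{\pi i}{2}\cdot\frac{4\,S_2(m)}{m}\right),
\]
so this value is a fourth root of unity that depends only on the residue of $\tfrac{4\,S_2(m)}{m}$ modulo $4$. Explicitly, the residues $0,1,2,3\pmod 4$ yield the values $1,i,-1,-i$, respectively.

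Now I would simply invoke the three cases of Lemma~7: when $m=4$ the residue is $1$, giving $i$; when $m=8$ or $m=4p^\alpha$ with $p\equiv 3\pmod 4$ the residue is $2$, giving $-1$; and in all remaining cases divisible by $4$ the residue is $0$, giving $1$. These match the definition of $\delta(m)$ exactly, completing the identification.

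There is essentially no obstacle beyond this bookkeeping, since all the analytic content is already contained in Corollary~8 and all the arithmetic content in Lemma~7; the only mild subtlety is to notice that every $m$ divisible by $4$ genuinely falls under the hypothesis ``contains a square'' of Corollary~8, so that formula~\eqref{5.6} is available without further justification.
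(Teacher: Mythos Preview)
Your proposal is correct and follows essentially the same route as the paper: apply Corollary~8, then use Lemma~7 to evaluate the prefactor $e^{2\pi iS_2(m)/m}$ as a fourth root of unity via the residue of $4S_2(m)/m$ modulo~$4$. The paper phrases the key step as $e^{2\pi iS_2(m)/m}=i^{4S_2(m)/m}$, which is exactly your $\exp\!\bigl(\tfrac{\pi i}{2}\cdot\tfrac{4S_2(m)}{m}\bigr)$; your extra remark that $4\mid m$ guarantees the ``contains a square'' hypothesis of Corollary~8 is a small clarification the paper leaves implicit.
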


This follows immediately from Corollary~5 and Lemma~5 if we note that
\[
e^{2\pi iS_2(m)/m} = i^{4S_2(m)/m}.
\]
The identities \eqref{5.7} and \eqref{5.9} are examples of the first, resp.\ the
second case of the definition of $\delta(m)$.

To conclude this section, we remark that infinite products involving cyclotomic
polynomials, but from a different point of view and of a different type, were
studied in \cite{DN}.

\section{Further Remarks}

The sum $S_2(m)$, defined in \eqref{5.5}, is an interesting object in its own 
right; we will therefore give closed form expressions in all cases.
In the process we require an auxiliary result concerning the ``full" analogue
\begin{equation}\label{6.1}
S_1(m):= \sum_{\substack{j=1\\(j,m)=1}}^{m-1}j.
\end{equation}

\begin{proposition}
Let $m\geq 2$ be an integer. Then
\begin{equation}\label{6.2}
S_1(m) = \tfrac{1}{2} m\varphi(m).
\end{equation}
If $m$ is odd, with distinct prime factors $p_1,\ldots,p_r$, then
\begin{equation}\label{6.3}
S_2(m) = \tfrac{1}{8}\left(m\varphi(m)-(1-p_1)\dots(1-p_r)\right)
\end{equation}
If $m$ is divisible by $4$, then
\begin{equation}\label{6.4}
S_2(m) = \tfrac{1}{8} m\varphi(m).
\end{equation}
Finally, if $m=2m'$, where $m'$ is odd, then
\begin{equation}\label{6.5}
S_2(m) = \tfrac{1}{4} m\varphi(m) - 2S_2(m').
\end{equation}
\end{proposition}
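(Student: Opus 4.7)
The plan is to treat the four assertions in sequence, each by a short combinatorial argument. For (\ref{6.2}), I would pair each reduced residue $j$ modulo $m$ with its complement $m-j$. Since $\gcd(j,m)=\gcd(m-j,m)$, this is a fixed-point-free involution on the set of reduced residues for $m\ge 3$ (the only candidate $j=m/2$ fails because $\gcd(m/2,m)=m/2>1$). Hence the $\varphi(m)/2$ pairs each contribute $m$, giving $S_1(m)=m\varphi(m)/2$; the case $m=2$ is immediate.

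For (\ref{6.3}) I would apply M\"obius inversion: writing $[\gcd(j,m)=1]=\sum_{d\mid\gcd(j,m)}\mu(d)$, swapping the order of summation in the definition of $S_2(m)$, and substituting $j=dk$ yields
\[
S_2(m)=\sum_{d\mid m}\mu(d)\,d\sum_{k=1}^{\lfloor(m-1)/(2d)\rfloor}k.
\]
When $m$ is odd, every $d\mid m$ is odd, so $m/d$ is odd and a short check gives $\lfloor(m-1)/(2d)\rfloor=(m/d-1)/2$; the inner sum then equals $((m/d)^2-1)/8$. Distributing and invoking the classical identities $\sum_{d\mid m}\mu(d)m^2/d=m\varphi(m)$ and $\sum_{d\mid m}\mu(d)\,d=\prod_{p\mid m}(1-p)$ yields (\ref{6.3}).

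For (\ref{6.4}) with $4\mid m$, my key observation is that the shift $j\mapsto j+m/2\pmod m$ permutes the reduced residues modulo $m$: since $4\mid m$ makes $m/2$ even, $j$ odd forces $j+m/2$ odd; and for any odd prime $p\mid m$ we have $p\mid m/2$, so $p\mid(j+m/2)$ would imply $p\mid j$, contradicting $\gcd(j,m)=1$. This shift maps the lower half (reduced residues $<m/2$) bijectively onto the upper half, adding $m/2$ to each element. Therefore the upper-half sum equals $S_2(m)+(m/2)(\varphi(m)/2)$, and combining with (\ref{6.2}) gives $2S_2(m)+m\varphi(m)/4=m\varphi(m)/2$, i.e., (\ref{6.4}).

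Finally, for (\ref{6.5}) with $m=2m'$ and $m'$ odd, the reduced residues modulo $m$ in $[1,m-1]$ are exactly the odd reduced residues modulo $m'$ (since $\gcd(2,m')=1$); those at most $m/2=m'$ are the odd reduced residues of $m'$ lying in $[1,m'-1]$, as $j=m'$ is not coprime to $m$. The even reduced residues of $m'$ in this range are $2k$ with $\gcd(k,m')=1$ and $1\le k\le(m'-1)/2$, summing to $2S_2(m')$, so the odd ones sum to $S_1(m')-2S_2(m')=\tfrac12 m'\varphi(m')-2S_2(m')$. Rewriting $\tfrac12 m'\varphi(m')=\tfrac14 m\varphi(m)$ then gives (\ref{6.5}). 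The only delicate point is the floor evaluation in (\ref{6.3}); all other steps reduce to the involution, shift bijection, and parity splitting sketched above.
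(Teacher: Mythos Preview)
Your proof is correct in all four parts. The pairing for \eqref{6.2}, the M\"obius-inversion computation for \eqref{6.3} (including the floor evaluation $\lfloor (m-1)/(2d)\rfloor=(m/d-1)/2$, which is indeed correct since $(m-1)/(2d)-(m/d-1)/2=(d-1)/(2d)\in[0,1)$), the shift bijection for \eqref{6.4}, and the parity splitting for \eqref{6.5} all go through as you sketch.

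Compared with the paper: the paper does not actually prove \eqref{6.2}, \eqref{6.3}, or \eqref{6.4} at all, but simply cites them as exercises in Nagell and a result from \cite{Zm}/OEIS, respectively. The only part it proves in full is \eqref{6.5}, and there its argument is essentially identical to yours---split the reduced residues of $m'$ in $[1,m'-1]$ into odd and even parts, observe that the even part sums to $2S_2(m')$, and use $\varphi(2m')=\varphi(m')$ to rewrite $S_1(m')$ as $\tfrac14 m\varphi(m)$. So your treatment is strictly more self-contained: you supply short direct proofs where the paper outsources to the literature, while matching the paper exactly on the one identity it does argue.
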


\begin{proof}
The identity \eqref{6.2} appears in \cite[pp.~42--43]{Na} as Exercise~19, and
\eqref{6.3} as Exercise~36 in \cite[p.~45]{Na}. The identity \eqref{6.4}, with
references, was already mentioned in the proof of Lemma~5 above. It remains to
prove \eqref{6.5}. To do so, we use the definitions \eqref{5.5} and \eqref{6.1}
to get
\begin{align*}
S_2(2m') &= \sum_{\substack{j=1\\(j,2m')=1}}^{m'}j
= \sum_{\substack{j=1\\(j,m')=1}}^{m'}j
-\sum_{\substack{j=1\\(j,m')=1\\j\, {\rm even}}}^{m'}j \\
&= S_1(m')-\sum_{\substack{j'=1\\(j',m')=1}}^{\lfloor m'/2\rfloor}2j'
= S_1(m')-2S_2(m').
\end{align*}
Finally, by \eqref{6.2} and since $m=2m'$ with $m'$ odd, we have
\[
S_1(m') = \tfrac{1}{2}m'\varphi(m') = \tfrac{1}{4}m\varphi(m),
\]
which gives \eqref{6.5}.
\end{proof}

Since this paper has been mostly about finite and infinite products, it is 
worth mentioning that multiplicative analogues of the sums $S_1(m)$ and 
$S_2(m)$ are special cases of the recently studied {\it Gauss factorials} 
\cite{CD}.

\section*{Acknowledgements}
The first author was supported in part by the Natural Sciences and 
Engineering Research Council of Canada. This work was completed during a visit by K. Dilcher
as invited Professor at Universit\'{e} Paris-Sud.

\end{document}